\newcommand{\pref}[1]{(\ref{#1})}
\newcommand{\be}{\begin{equation}}
\newcommand{\ee}{\end{equation}}
\newcommand{\qed}{{\unskip\nobreak\hfil\penalty50\quad\null\nobreak\hfil
	$\square$\parfillskip0pt\finalhyphendemerits0\par\medskip}}
\newcommand{\vecc}{\mbox{\boldmath $ c $}}
\newcommand{\vecf}{\mbox{\boldmath $ f $}}
\newcommand{\veckappa}{\mbox{\boldmath $ \kappa $}}
\newcommand{\vecnu}{\mbox{\boldmath $ \nu $}}
\newcommand{\vecrho}{\mbox{\boldmath $ \rho $}}
\newcommand{\vectau}{\mbox{\boldmath $ \tau $}}
\renewcommand{\epsilon}{\varepsilon}
\newtheorem{thm}{Theorem}[section]
\newtheorem{prop}{Proposition}[section]
\newtheorem{lem}{Lemma}[section]
\newtheorem{cor}{Corollary}[section]
\newtheorem{rem}{Remark}[section]
\newtheorem{proof}{\normalfont\itshape Proof.}
 \def\wideubar{\underaccent{{\cc@style\underline{\mskip10mu}}}}
\title{Asymptotic analysis for non-local curvature flows for plane curves with a general rotation number}
\author{Takeyuki Nagasawa\thanks{Graduate School of Science and Engineering,
Saitama University,
Japan,
e-mail:
tnagasaw@rimath.saitama-u.ac.jp;
Partly supported by Grant-in-Aid for Scientific Research (C) 
(17K05310),
Japan Society for the Promotion Science.}
\and Kohei Nakamura\thanks{Graduate School of Science and Engineering,
Saitama University,
Japan,
e-mail:
knakamura@rimath.saitama-u.ac.jp}}
\date{}
\begin{document}
\maketitle
\begin{abstract}
Several non-local curvature flows for plane curves with a general rotation number are discussed in this work.  
The types of flows include the area-preserving flow and the length-preserving flow.
We have a relatively good understanding of these flows for plane curves with the rotation number one.
In particular,
when the initial curve is strictly convex,
the flow exists globally in time,
and converges to a circle as time tends to infinity.
Even if the initial curve is not strictly convex,
a global solution,
if it exists,
converges to a circle.
Here,
we deal with curves with a general rotation number,
and show,
not only a similar result for global solutions,
but also a blow-up criterion,
upper estimates of the blow-up time,
and blow-up rate from below.
For this purpose,
we use a geometric quantity which has never been considered before.
\\
MSC2020 Mathematics Subject Classification:
53E10,
35K93,
35B40,
53A04
\end{abstract}
\section{Introduction}
\par
In this paper,
we deal with curvature flows comprising non-local terms for plane curves with a general rotation number.
Let $ \vecf $ be an $ \mathbb{R}^2 $-valued function on $ \mathbb{R} / L(t) \mathbb{Z} \times [ 0 , T ) $ such that for a fixed $ t \in [ 0 , T ) $,
it is an arc-length parametrization of a closed plane curve with total length $ L(t) $.
In the following text,
we simply denote $ L(t) $ as $ L $ in many cases.
To explain the curvature flow that is considering in this work,
we introduce a certain geometric quantity.
For a fixed $ t \in [ 0 , T ) $,
$ s \in \mathbb{R} / L \mathbb{Z} $ is an arc-length parameter.
Then,
$ \vectau = \partial_s \vecf $ and $ \veckappa = \partial_s^2 \vecf $ are the unit tangent vector and the curvature vector respectively.
The vector $ \vecnu $ is a unit normal vector given by rotating $\vectau$ counter-clockwise by $ \frac \pi 2 $.
The curvature $ \kappa $ and its deviation $ \tilde \kappa $ are given by
\[
	\kappa = \veckappa \cdot \vecnu
	,
	\quad
	\tilde \kappa = \kappa - \frac 1L \int_0^L \kappa \, ds .
\]
Here,
$ \tilde \kappa $ is a non-local quantity.
The equation we consider is of the following form:
\[
	\partial_t \vecf =
	\left( \tilde \kappa - \frac gL \right) \vecnu .
\]
Here,
we assume that the function $ g $ is a scale-invariant non-local quantity determined by $ \vecf $.
That is,
set
$ \vecf_\lambda (s) = \frac 1 \lambda \vecf ( \lambda s ) $ ($ s \in \mathbb{R} / \lambda^{-1} L \mathbb{Z} $),
then,
\[
	g ( \vecf_\lambda ) = g ( \vecf ) .
\]
\par
Here we study three cases of $ g $:
\begin{itemize}
\item[(AP)]
If we set $ g \equiv 0 $,
then our equation represents the area-preserving flow.
In fact,
we set $ A $ as
\[
	A = - \frac 12 \int_0^L \vecf \cdot \vecnu \, ds
\]
which is the enclosed area when $ \mathrm{Im} \vecf $ is a simple curve.
Consequently,
it holds that
\[
	\frac { dA } { dt } = 0 .
\]
\item[(LP)]
Let $ \displaystyle{ g = L \left( \int_0^L \kappa \, ds \right)^{-1} \int_0^L \tilde \kappa^2 ds } $.
Here,
the equation represents the length-preserving flow:
\[
	\frac { dL } { dt } = 0.
\]
\item[(JP)]
Jiang-Pan considered an equation with $ \displaystyle{ g = \frac { L^2 } { 2A } - \int_0^L \kappa \, ds } $ in \cite{JP}.
Here,
the isoperimetric ratio does not increase along with the flow:
\[
	\frac d { dt } \frac { L^2 } A = - \frac { 2L } A \int_0^L \| \partial_t \vecf \|^2 ds .
\]
\end{itemize}
\par
Let 
\[
	n = \frac 1 { 2 \pi } \int_0^L \kappa \, ds
\]
be the rotation number.
For classical solutions,
the rotation number $ n $ is independent of $ t $.
There are a multitude of literature available considering the case when $ n = 1 $ in the above equations.
First of all,
we should mention Gage's result \cite{G}.
Assume that $ \mathrm{Im} \vecf (0) $ is a strictly convex,
closed curve with a rotation number equal to 1 in the class of $ C^2 $.
Then,
the solution $ \vecf $ with the initial data $ \vecf (0) $ exists globally in time,
and $ \mathrm{Im} \vecf (t) $ converges to a circle with a surrounding area $ A(0) $ as $ t \to \infty $.
Similar results for (LP) and (JP) were proved by \cite{MZ} and \cite{JP} respectively under the convexity condition.
The authors considered flows without the convexity condition in \cite{Nagasawa-Nakamura1,Nakamura2}.
Instead of convexity,
we assume the global existence of the solution.
Then the solution of (AP),
(LP),
or (JP) converges to a circle as $ t \to \infty $ exponentially.
As a result,
the curvature uniformly converges to a positive constant,
and thus,
the curve becomes convex in finite time.
In our previous works,
the isoperimetric deficit
\[
	I_{-1} = 1 - \frac { 4 \pi A } { L^2 }
\]
played an important role.
First,
we show the decay of $ I_{-1} $.
Set
\[
	I_\ell
	=
	L^{ 2 \ell + 1 } \int_0^L \left| \tilde \kappa^{( \ell ) } \right|^2 ds
	\mbox{ for } \ell \in \{ 0 \} \cup \mathbb{N} .
\]
In \cite{Nagasawa-Nakamura1},
we showed the inequality
\be
	I_j
	\leqq
	C \left( I_{-1}^{ \frac { \ell - j } 2 } I_\ell
	+ I_{-1}^{ \frac { \ell - j } { \ell + 1 } }
	I_\ell^{ \frac { j + 1 } { \ell + 1 } }
	\right)
	\label{interpolation1}
\ee
for an integer $ j \in [ 0 , \ell ] $ with a positive constant $ C = C ( j , \ell ) $ independent of the total length of curve.
Since $ I_{-1} $ is small for a sufficiently large $ t $,
we can regard this inequality as an embedding with a small embedding constant.
We showed the exponential decay of $ I_\ell $ using the standard energy method,
combining the above inequality.
Finally,
using the decay of $ I_\ell $,
we showed the convergence of $ \mathrm{Im} \vecf $ to a circle.
\par
In this paper we study the case of $ n > 1 $,
when the isomerimetric deficit is
\[
	I_{-1} = 1 - \frac { 4 n \pi A } { L^2 } .
\]
The isoperimetric inequality shows $ I_{-1}\geqq 0 $ when $ n = 1 $.
However,
$ I_{-1} $ is not necessarily non-negative for $ n > 1 $.
This implies the technique used in \cite{Nagasawa-Nakamura1,Nakamura2} is not applicable for $ n > 1 $.
In spite of this,
$ I_{-1} $ gives us some useful information.
For example,
we can show that if $ I_{-1} $ is negative for $ t = 0 $,
then the solution blows up in finite time.
See our first main result,
Theorem \ref{Theorem blow-up}.
This implies $ I_{-1} \geqq 0 $ for global solutions.
Unfortunately,
the inequality \pref{interpolation1} does not holds for $ n > 1 $.
Therefore,
we use a geometric quantity which has never been considered before,
given as follows:
\[
	\widetilde I_{-1}
	=
	\frac 1L
	\left\|
	\frac { 2 \pi n } L
	\left( \vecf - \frac 1L \int_0^L \vecf \, ds \right)
	+
	\vecnu
	\right\|_{ L^2 }^2 .
\]
Then we can show
\be
	I_j
	\leqq
	C \left( \widetilde{ I }_{-1}^{ \frac { \ell - j } 2 } I_\ell
	+ \widetilde{ I }_{-1}^{ \frac { \ell - j } { \ell + 1 } }
	I_\ell^{ \frac { j + 1 } { \ell + 1 } }
	\right)
	.
	\label{interpolation2}
\ee
Once we obtain this inequality,
we can obtain a similar asymptotic for the global solutions of (AP),
(LP),
and (JP).
See Theorem \ref{Theorem convergence},
which is the second main results of our paper.
\par
We prepare several inequalities and estimates for closed curves with a rotation number $ n $,
in \S~\ref{Preliminaries}.
Using these,
in \S~\ref{Blow-up},
we discuss blow-up solutions with blow-up time estimates,
blow-up quantities,
and blow-up rates.
In the final \S~\ref{Global solutions},
the convergence to an $ n $-fold circle of global solutions is proved.
\begin{rem}
The argument in this paper works even for the case where $ n = 1 $,
and therefore,
the results here include those of the previous papers \cite{Nagasawa-Nakamura1,Nakamura2}.
However,
since the argument there does not need $ \widetilde I_{-1} $,
the authors think that it is simpler than that presented in this paper.
\end{rem}
\section{Preliminaries}
\label{Preliminaries}
\par
In this section,
we provide several estimates and inequalities for plane curves.
Those in \S~\ref{Estimates} hold for curves which are not necessarily solutions of the flows.
We derive the basic properties of flows in \S~\ref{Properties}.
\subsection{Estimates for plane curves}
\label{Estimates}
\par
Let $ \vecf = ( f_1 , f_2 ) $ be an arc-length parametrization of a plane curve with the rotation number $ n \geqq 1 $.
Set
\[
	f = f_1 + i f_2 ,
	\quad
	\nu = \nu_1 + i \nu_2 = - f_2^\prime + i f_1^\prime = i f^\prime
	.
\]
The functions $ \displaystyle{ \varphi_k = \frac 1 { \sqrt { 2 \pi } } \exp \left( \frac { 2 \pi i k s } L \right) } $ for $ k \in \mathbb{Z} $ generate the standard complete orthogonal system of $ L^2 ( \mathbb{R} / L \mathbb{Z} ) $.
Let $ \hat f (k) $ be the Fourier coefficient of $ f $.
Subsequently,
we can derive the following relations in a manner similar to \cite[Corollary 2.1]{Nagasawa-Nakamura1},
where we dealt with the case of $ n = 1 $.
\begin{lem}
\begin{align}
	\sum_{ k \in \mathbb{Z} } k | \hat f(k) |^2
	= & \
	\frac { LA } \pi
	\label{ser1}
	,
	\\
	\sum_{ k \in \mathbb{Z} } k^2 | \hat f(k) |^2
	= & \
	\left( \frac L { 2 \pi } \right)^2
	\int_0^L \kappa^0 ds
	=
	\frac { L^3 } { 4 \pi^2 } ,
	\label{ser2}
	\\
	\sum_{ k \in \mathbb{Z} } k^3 | \hat f(k) |^2
	= & \
	\left( \frac L { 2 \pi } \right)^3
	\int_0^L \kappa \, ds
	=
	\frac { n L^3 } { 4 \pi^2 } ,
	\label{ser3}
	\\
	\sum_{ k \in \mathbb{Z} } k^4 | \hat f(k) |^2
	= & \
	\left( \frac L { 2 \pi } \right)^4
	\int_0^L \kappa^2 ds
	,
	\label{ser4}
	\\
	\sum_{ k \in \mathbb{Z} } k^5 | \hat f(k) |^2
	= & \
	\left( \frac L { 2 \pi } \right)^5
	\int_0^L \kappa^3 ds
	,
	\label{ser5}
	\\
	\sum_{ k \in \mathbb{Z} } k^6 | \hat f(k) |^2
	= & \
	\left( \frac L { 2 \pi } \right)^6
	\int_0^L \left\{ \kappa^4 + ( \kappa^\prime )^2 \right\} ds.
	\label{ser6}
\end{align}
\end{lem}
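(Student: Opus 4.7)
The plan is to reduce every identity to Parseval's formula applied to two derivatives of $f$, after first encoding the geometry in complex form.  Since the parametrization is by arc length and $\nu = if'$, the Frenet identity becomes simply $f'' = i\kappa f'$.  Differentiating once more gives $f''' = i\kappa' f' - \kappa^2 f' = (i\kappa' - \kappa^2) f'$, and hence $|f'|^2 = 1$, $|f''|^2 = \kappa^2$, $|f'''|^2 = (\kappa')^2 + \kappa^4$.  Moreover, term‑by‑term differentiation of the Fourier series gives $\widehat{f^{(m)}}(k) = (2\pi i k/L)^m \hat f(k)$.

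For the three \emph{even}-order identities \pref{ser2}, \pref{ser4}, \pref{ser6}, I would apply Parseval in the form
\[
\int_0^L |f^{(m)}|^2\,ds = \sum_{k\in\mathbb Z}\left(\frac{2\pi k}{L}\right)^{2m}|\hat f(k)|^2
\qquad (m=1,2,3),
\]
and evaluate the left-hand side using $|f'|^2=1$, $|f''|^2=\kappa^2$, $|f'''|^2=(\kappa')^2+\kappa^4$.  This immediately yields \pref{ser2}, \pref{ser4} and \pref{ser6}.

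For the \emph{odd}-order identities \pref{ser1}, \pref{ser3}, \pref{ser5} the same idea is applied to mixed pairs.  From $\widehat{f^{(m+1)}}(k)\overline{\widehat{f^{(m)}}(k)} = i\left(\tfrac{2\pi}{L}\right)^{2m+1}k^{2m+1}|\hat f(k)|^2$, Parseval gives
\[
\int_0^L f^{(m+1)}\,\overline{f^{(m)}}\,ds
= i\left(\frac{2\pi}{L}\right)^{2m+1}\sum_{k\in\mathbb Z} k^{2m+1}|\hat f(k)|^2,
\]
so each series is isolated by taking the imaginary part of a single integral.  For $m=0$ one computes $f'\bar f$ and recognizes that $\mathrm{Im}(f'\bar f) = -\vecf\cdot\vecnu$ up to a sign, so the integral equals $2A$ and \pref{ser1} follows.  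For $m=1$, $f''\bar{f'} = i\kappa|f'|^2 = i\kappa$, whose integral is $2\pi i n$, giving \pref{ser3}.  For $m=2$, $f'''\overline{f''} = (i\kappa' - \kappa^2)(-i\kappa) = \kappa\kappa' + i\kappa^3$, and the real part $\int_0^L \kappa\kappa'\,ds$ vanishes by periodicity, leaving exactly $i\int_0^L\kappa^3\,ds$, which yields \pref{ser5}.

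No step is really the bottleneck; the whole computation is a careful bookkeeping of the factor $\left(\tfrac{2\pi i}{L}\right)^m$ and of conjugations.  The only place that deserves real care is the sign in \pref{ser1}: one has to match the convention $A = -\tfrac12\int \vecf\cdot\vecnu\,ds$ with the complex identity $\vecf\cdot\vecnu = \mathrm{Im}(f\overline{f'})$, and the rest is symbolic.  Because the passage from $n=1$ to general $n$ affects only \pref{ser3} (through $\int_0^L\kappa\,ds = 2\pi n$), the proof is essentially the same as \cite[Corollary 2.1]{Nagasawa-Nakamura1}, with that single modification.
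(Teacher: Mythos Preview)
Your proposal is correct and is precisely the approach the paper intends: the paper does not spell out a proof but defers to \cite[Corollary~2.1]{Nagasawa-Nakamura1}, which is exactly the Parseval-plus-Frenet computation you outline, and you correctly note that the only change for general rotation number is $\int_0^L \kappa\,ds = 2\pi n$ in \pref{ser3}. The bookkeeping of signs in \pref{ser1} and the vanishing of $\int_0^L \kappa\kappa'\,ds$ in \pref{ser5} are handled accurately.
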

\par
Note that we have
\be
	\sum_{ k \in \mathbb{Z} } k^2 ( k - n ) | \hat f (k) |^2
	= 0
	\label{useful}
\ee
from \pref{ser2}--\pref{ser3}.
The above is very useful for our analysis.
\begin{lem}
We have
\[
	I_0
	=
	\frac { 16 \pi^4 } { L^3 }
	\sum_{ k \in \mathbb{Z} } k^3 ( k - n ) | \hat f (k) |^2
	=
	\frac { 16 \pi^4 } { L^3 }
	\sum_{ k \in \mathbb{Z} }
	k^2 ( k - n )^2 | \hat f (k) |^2
	.
\]
\label{Lemma I_0}
\end{lem}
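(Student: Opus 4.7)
The plan is to unwind the definition $I_0 = L \int_0^L \tilde\kappa^2\, ds$ and plug in the two Fourier identities \pref{ser3} and \pref{ser4}, then use the orthogonality-type identity \pref{useful} to rewrite the result in the second form.

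First, I would expand the square. Since $\int_0^L \kappa\, ds = 2\pi n$ we have $\tilde\kappa = \kappa - \frac{2\pi n}{L}$, and therefore
\[
    I_0 = L \int_0^L \tilde\kappa^2\, ds
    = L \int_0^L \kappa^2\, ds - 4\pi n \int_0^L \kappa\, ds + 4\pi^2 n^2
    = L \int_0^L \kappa^2\, ds - 4\pi^2 n^2 .
\]
Now I would invoke the Parseval-type identities already established: by \pref{ser4},
\[
    L \int_0^L \kappa^2\, ds = \frac{16\pi^4}{L^3}\sum_{k\in\mathbb Z} k^4 |\hat f(k)|^2 ,
\]
and by \pref{ser3},
\[
    4\pi^2 n^2 = 4\pi^2 n \cdot \frac{4\pi^2}{L^3}\cdot\frac{n L^3}{4\pi^2} \cdot \frac{1}{n}\cdot n
    = \frac{16\pi^4 n}{L^3}\sum_{k\in\mathbb Z} k^3 |\hat f(k)|^2 .
\]
Subtracting yields the first equality
\[
    I_0 = \frac{16\pi^4}{L^3}\sum_{k\in\mathbb Z} k^3 (k-n) |\hat f(k)|^2 .
\]

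For the second equality I would use the algebraic identity $k^3(k-n) - k^2(k-n)^2 = n\, k^2 (k-n)$. Summing against $|\hat f(k)|^2$ and applying \pref{useful}, namely $\sum_k k^2(k-n)|\hat f(k)|^2 = 0$, the difference vanishes, so
\[
    \sum_{k\in\mathbb Z} k^3 (k-n) |\hat f(k)|^2
    = \sum_{k\in\mathbb Z} k^2 (k-n)^2 |\hat f(k)|^2 ,
\]
which yields the second representation.

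There is no real obstacle here; the statement is essentially bookkeeping on top of the Fourier identities of the previous lemma. The only thing worth highlighting is that the second, manifestly non-negative, form is the useful one for later analysis, and it is \pref{useful} (coming from matching the moments of degree two and three) that makes the symmetric expression available.
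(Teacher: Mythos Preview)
Your proof is correct and follows essentially the same route as the paper: expand $I_0$ using \pref{ser3} and \pref{ser4} to get the first expression, then invoke \pref{useful} for the second. The only cosmetic difference is that the paper writes $\int_0^L \tilde\kappa^2\,ds = \int_0^L \tilde\kappa\,\kappa\,ds$ before expanding, whereas you expand the square directly; your display rewriting $4\pi^2 n^2$ via \pref{ser3} is correct but oddly presented (the factor $\frac{1}{n}\cdot n$ is superfluous) and could simply read $4\pi^2 n^2 = \frac{16\pi^4 n}{L^3}\cdot\frac{nL^3}{4\pi^2} = \frac{16\pi^4 n}{L^3}\sum_k k^3|\hat f(k)|^2$.
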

\begin{proof}
We obtain the first expression of $ I_0 $ as
\begin{align*}
	I_0 = & \
	L \int_0^L \tilde \kappa^2 ds
	=
	L \int_0^L \tilde \kappa \kappa \, ds
	=
	L \left( \int_0^L \kappa^2 ds - \frac { 2 \pi n } L \int_0^L \kappa \, ds \right)
	\\
	= & \
	\frac { 16 \pi^4 } { L^3 }
	\sum_{ k \in \mathbb{Z} } k^3 ( k - n ) | \hat f (k) |^2
\end{align*}
from \pref{ser4} and \pref{ser3}.
Combining this with \pref{useful},
we obtain the second expression.
\qed
\end{proof}
\par
The non-negativity of $ I_0 $ is not obvious from the first expression.
However,
it can be seen in the second one.
Furthermore,
we see from the second expression that $ I_0 = 0 $ if and only if $ \mathrm{Im} \vecf $ is an $ n $-fold circle.
\par
The isoperimetric inequality holds even if $ n $ is not  $ 1 $.
\begin{lem}
We have $ L^2 - 4 \pi A \geqq 0 $.
\label{isoperimetric inequality}
\end{lem}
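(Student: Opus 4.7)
The plan is to give a Hurwitz-type Fourier-series proof relying only on the two moment identities \pref{ser1} and \pref{ser2} of the previous lemma. Multiplying each of them by $4\pi^2/L$ and rearranging,
\[
	L^2
	=
	\frac{4\pi^2}{L} \sum_{k \in \mathbb{Z}} k^2 \, | \hat f (k) |^2 ,
	\qquad
	4 \pi A
	=
	\frac{4\pi^2}{L} \sum_{k \in \mathbb{Z}} k \, | \hat f (k) |^2 ,
\]
so that subtracting gives
\[
	L^2 - 4 \pi A
	=
	\frac{4\pi^2}{L} \sum_{k \in \mathbb{Z}} k ( k - 1 ) \, | \hat f (k) |^2 .
\]
I would then observe that $k(k-1)$ is a product of two consecutive integers and hence non-negative for every $k \in \mathbb{Z}$ (vanishing exactly for $k \in \{0,1\}$). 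Every summand is therefore non-negative and the inequality follows termwise.

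There is essentially no obstacle: the proof is a one-line application of the identities already proved. The assumption $n \geqq 1$ is not even used in the algebra; only the background setup enters. In particular the auxiliary identity \pref{useful} is not needed here, and will first play a role later when the weight appearing in a Fourier expression---unlike the manifestly signed $k(k-1)$---requires a rewriting to reveal its non-negativity (as for $I_0$ in Lemma \ref{Lemma I_0}). As a sanity check, an $n$-fold circle of radius $r$ has $L = 2\pi r n$ and $A = \pi r^2 n$, giving
\[
	L^2 - 4 \pi A = 4 \pi^2 r^2 n ( n - 1 ) \geqq 0 ,
\]
with equality exactly when $n = 1$, consistent with the equality case of the classical Hurwitz theorem, and showing that the inequality is strict at $n$-fold circles for every $n \geqq 2$.
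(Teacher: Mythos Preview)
Your proof is correct and is essentially identical to the paper's own argument: both use \pref{ser1} and \pref{ser2} to write $L^2 - 4\pi A = \dfrac{4\pi^2}{L}\sum_{k\in\mathbb{Z}} k(k-1)|\hat f(k)|^2$ and then invoke the termwise non-negativity of $k(k-1)$. Your additional remarks about \pref{useful} and the $n$-fold circle example are accurate but extraneous to the proof itself.
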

\begin{proof}
It follows from \pref{ser2} and \pref{ser1} that
\[
	L^2 - 4 \pi A
	=
	\frac { 4 \pi^2 } L
	\left( \frac { L^3 } { 4 \pi^2 } - \frac { LA } \pi \right)
	=
	\frac { 4 \pi^2 } L
	\sum_{ k \in \mathbb{Z} } k ( k - 1 ) | \hat f (k) |^2
	\geqq 0
	.
\]
\qed
\end{proof}
\par
Similarly,
$ I_{-1} $ has two expressions.
\begin{lem}
We have
\[
	I_{-1}
	=
	\frac { 4 \pi^2 } { L^3 }
	\sum_{ k \in \mathbb{Z} } k ( k - n ) | \hat f (k) |^2
	=
	-
	\frac { 4 \pi^2 } { n L^3 }
	\sum_{ k \in \mathbb{Z} \setminus \{ 0 \} } k ( k - n )^2 | \hat f (k) |^2
	.
\]
\label{Lemma I_{-1}}
\end{lem}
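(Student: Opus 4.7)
The plan is to derive both identities by direct Fourier-coefficient computation, paralleling the proofs of Lemma~\ref{Lemma I_0} and Lemma~\ref{isoperimetric inequality}. The only inputs needed are \pref{ser1}, \pref{ser2}, and the derived relation \pref{useful}.

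For the first equality, I would start from the definition $I_{-1} = 1 - 4n\pi A / L^2$ and rewrite it as
\[
I_{-1} = \frac{4\pi^2}{L^3}\left(\frac{L^3}{4\pi^2} - \frac{nLA}{\pi}\right).
\]
By \pref{ser2} the first factor in parentheses equals $\sum_{k\in\mathbb{Z}} k^2 |\hat f(k)|^2$, and by \pref{ser1} the second equals $n\sum_{k\in\mathbb{Z}} k|\hat f(k)|^2$. Subtracting term by term and factoring $k$ out immediately gives $\sum_{k\in\mathbb{Z}} k(k-n)|\hat f(k)|^2$, which is the first expression.

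For the second equality, the key observation is the algebraic identity $k(k-n)^2 = k^3 - 2nk^2 + n^2 k$. Expanding the sum and invoking \pref{useful} to replace $\sum_k k^3 |\hat f(k)|^2$ by $n\sum_k k^2 |\hat f(k)|^2$, the right-hand side collapses to
\[
-n\sum_{k\in\mathbb{Z}} k^2 |\hat f(k)|^2 + n^2 \sum_{k\in\mathbb{Z}} k|\hat f(k)|^2 = -n\sum_{k\in\mathbb{Z}} k(k-n)|\hat f(k)|^2.
\]
Dividing by $-n$ and observing that the $k=0$ term of $k(k-n)^2$ vanishes (the factor of $k$ kills it) lets one restrict the summation to $\mathbb{Z}\setminus\{0\}$, as claimed.

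There is no genuine obstacle; both equalities reduce to bookkeeping of Fourier coefficients. The conceptual point, analogous to the passage from the first to the second expression in Lemma~\ref{Lemma I_0}, is that \pref{useful} allows one to trade the indefinite-sign factor $k-n$ for the squared factor $(k-n)^2$, producing a representation whose sign structure is controlled solely through the sign of the index $k$; this is presumably the form that will be exploited later to extract information about the sign of $I_{-1}$ in the blow-up analysis.
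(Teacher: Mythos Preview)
Your proof is correct and follows essentially the same route as the paper. The first identity is derived exactly as in the paper from \pref{ser1} and \pref{ser2}; for the second, the paper simply writes $k(k-n)^2 = k^2(k-n) - nk(k-n)$ and applies \pref{useful} directly to kill the first piece, which is a slightly more compact version of your monomial expansion, but the content is identical.
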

\begin{proof}
It follows from \pref{ser2} and \pref{ser1} that
\[
	I_{-1}
	=
	1 - \frac { 4 \pi n A } { L^2 }
	=
	\frac { 4 \pi^2 } { L^3 }
	\left(
	\frac { L^3 } { 4 \pi^2 } - \frac { n LA } \pi
	\right)
	=
	\frac { 4 \pi^2 } { L^3 }
	\sum_{ k \in \mathbb{Z} } k ( k - n ) | \hat f (k) |^2
	.
\]
The second expression of $ I_{-1} $ is obtained from the above and \pref{useful}.
\qed
\end{proof}
\par
Since $ k ( k - n ) $ is not necessarily non-negative when $ n > 1 $,
we know the same holds for $ I_{-1} $.
However,
the modulus of $ I_{-1} $ can be estimated by $ I_0 $ for $ n \geqq 1 $ as follows.
This is Wirtinger's inequality when $ n = 1 $.
\begin{lem}
It holds that $ 4 \pi^2 n | I_{-1} | \leqq I_0 $.
\label{Wirtinger type}
\end{lem}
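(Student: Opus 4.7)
The plan is to compare the two Fourier-series expressions already established: the second one in Lemma \ref{Lemma I_{-1}} for $I_{-1}$ and the second one in Lemma \ref{Lemma I_0} for $I_0$. Substituting these in, the inequality $4\pi^2 n |I_{-1}| \leqq I_0$ reduces, after cancelling the common factor $16\pi^4 / L^3$, to the purely algebraic statement
\[
	\left| \sum_{ k \in \mathbb{Z} \setminus \{ 0 \} } k ( k - n )^2 | \hat f (k) |^2 \right|
	\leqq
	\sum_{ k \in \mathbb{Z} } k^2 ( k - n )^2 | \hat f (k) |^2 .
\]

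First I would apply the triangle inequality on the left to pass the absolute value inside the sum, bounding $|k|$ in place of $k$. Next, since $k$ is an integer and the $k = 0$ term has already been excluded, one has $|k| \leqq k^2$ for every $k$ in the range of summation. Multiplying by the non-negative factor $(k - n)^2 |\hat f(k)|^2$ and summing, I get
\[
	\sum_{ k \in \mathbb{Z} \setminus \{ 0 \} } | k | ( k - n )^2 | \hat f (k) |^2
	\leqq
	\sum_{ k \in \mathbb{Z} \setminus \{ 0 \} } k^2 ( k - n )^2 | \hat f (k) |^2 .
\]
Finally, reinserting the $k = 0$ term on the right-hand side is harmless since its contribution vanishes (the factor $k^2 = 0$), so the right-hand side equals the full sum over $\mathbb{Z}$, yielding the claimed inequality.

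I do not foresee any real obstacle: the only subtle point is noticing that the second expressions in Lemmas \ref{Lemma I_0} and \ref{Lemma I_{-1}}, which were derived using the identity \pref{useful}, are precisely what turn the Wirtinger-type inequality into an elementary comparison of the weights $|k|$ and $k^2$ on $\mathbb{Z} \setminus \{ 0 \}$. Using the first expressions instead would leave signs on both sides and obscure the argument. One may also note that equality in the final bound forces $\hat f(k) = 0$ for all $|k| \geqq 2$ with $k \neq n$, which, together with the first relations, characterises the $n$-fold circle; but this refinement is not needed for the stated inequality.
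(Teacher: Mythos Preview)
Your proof is correct and follows essentially the same approach as the paper: both use the second Fourier expressions from Lemmas~\ref{Lemma I_0} and~\ref{Lemma I_{-1}} and reduce the inequality to the elementary fact that $|k| \leqq k^2$ for nonzero integers. The only cosmetic difference is that the paper shows $I_0 \pm 4\pi^2 n I_{-1} \geqq 0$ separately by factoring the coefficient as $k(k\mp 1)(k-n)^2 \geqq 0$, whereas you pass through the triangle inequality first; the underlying algebra is identical.
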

\begin{proof}
From Lemmas \ref{Lemma I_0}--\ref{Lemma I_{-1}} we obtain
\begin{align*}
	I_0 \pm 4 \pi^2 n I_{-1}
	= & \
	\frac { 16 \pi^4 } { L^3 }
	\sum_{ k \in \mathbb{Z} }
	\left\{
	k^2 ( k - n )^2 \mp k ( k - n )^2
	\right\} | \hat f (k) |^2
	\\
	= & \
	\frac { 16 \pi^4 } { L^3 }
	\sum_{ k \in \mathbb{Z} }
	k ( k \mp 1 ) ( k - n )^2 | \hat f (k) |^2
	\geqq 0
	.
\end{align*}
Here,
we use $ k ( k \mp 1 ) \geqq 0 $ for $ k \in \mathbb{Z} $.
\qed
\end{proof}
Set
\[
	\widetilde I_{-1} =
	\frac { 4 \pi^2 } { L^3 } \sum_{ k \in \mathbb{Z} \setminus \{ 0 \} } ( k - n )^2 | \hat f (k) |^2
	.
\]
\begin{prop}
We have
\[
	\widetilde I_{-1}
	=
	\frac 1L
	\left\| \frac { 2 \pi n } L \left( \vecf - \frac 1L \int_0^L \vecf \, ds \right) + \vecnu \right\|_{ L^2 }^2
	.
\]
$ \widetilde I_{-1} $ vanishes if and only if $ \mathrm{Im} \vecf $ is an $ n $-fold circle.
\end{prop}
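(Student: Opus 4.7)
The plan is to apply Parseval's identity to the Fourier series of $\vecf$ and $\vecnu$ in the orthogonal system $\{\varphi_k\}$ introduced at the start of \S\ref{Estimates}, and then read off the equality case directly from the Fourier side. Identify $\vecf$ with the complex function $f = f_1 + i f_2$ and write $f = \sum_{k\in\mathbb{Z}} \hat f(k)\,\varphi_k$. Subtracting the mean $L^{-1}\int_0^L f\,ds$ amounts to removing the zeroth Fourier coefficient, so
\[
	\frac{2\pi n}{L}\left( \vecf - \frac{1}{L}\int_0^L \vecf\, ds \right)
	=
	\frac{2\pi n}{L}\sum_{k\in\mathbb{Z}\setminus\{0\}}\hat f(k)\,\varphi_k ,
\]
while $\nu = i f^\prime$ gives, after term-by-term differentiation,
\[
	\vecnu
	=
	-\frac{2\pi}{L}\sum_{k\in\mathbb{Z}} k\,\hat f(k)\,\varphi_k ,
\]
in which the $k=0$ contribution vanishes automatically because of the factor $k$.

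Adding the two expansions, the function inside the $L^2$-norm becomes $-\frac{2\pi}{L}\sum_{k\neq 0}(k-n)\,\hat f(k)\,\varphi_k$. Invoking Parseval's identity in exactly the form that underpins \pref{ser1}--\pref{ser6} yields
\[
	\left\| \frac{2\pi n}{L}\left(\vecf - \frac{1}{L}\int_0^L \vecf\,ds\right) + \vecnu \right\|_{L^2}^2
	=
	\frac{4\pi^2}{L^2}\sum_{k\in\mathbb{Z}\setminus\{0\}}(k-n)^2|\hat f(k)|^2 ,
\]
and dividing by $L$ matches the defining formula for $\widetilde I_{-1}$ stated immediately before the proposition.

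For the characterization of the vanishing locus, $\widetilde I_{-1}=0$ forces $(k-n)\hat f(k) = 0$ for every $k\neq 0$, so $\hat f(k) = 0$ whenever $k \notin \{0,n\}$. Hence $f(s) = a + b\exp(2\pi i n s / L)$ for some constants $a, b \in \mathbb{C}$, and the arc-length condition $|f^\prime| = 1$ pins down $|b| = L/(2\pi n)$, so $\mathrm{Im}\,\vecf$ is a circle of radius $L/(2\pi n)$ traversed $n$ times. The converse direction is immediate, since such an $n$-fold circle has precisely those two non-vanishing Fourier modes. No single step is substantively difficult; the main point to be careful about is the bookkeeping of the $k=0$ mode, which is excluded by mean-subtraction in one term and annihilated by the factor $k$ in the other, so that the two expansions align perfectly at that index when added.
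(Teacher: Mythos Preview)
Your proof is correct and follows essentially the same Fourier/Parseval approach as the paper. The only cosmetic difference is that the paper expands the squared norm as $\|\nu\|_{L^2}^2 + \tfrac{4\pi n}{L}\Re\langle\tilde f,\nu\rangle_{L^2} + (\tfrac{2\pi n}{L})^2\|\tilde f\|_{L^2}^2$ and evaluates each piece separately via Fourier, whereas you add the two Fourier expansions first and apply Parseval once; your version is slightly more streamlined but mathematically identical.
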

\begin{proof}
Setting
\[
	\tilde f = f - \frac 1L \int_0^L f \, ds ,
\]
we have
\[
	\| \tilde f \|_{ L^2 }^2 = \sum_{ k \in \mathbb{Z} \setminus \{ 0 \} } | \hat f ( k ) |^2 .
\]
The squared $ L^2 $-norm of $ \nu $ is
\[
	\| \nu \|_{ L^2 }^2 = \| f^\prime \|_{ L^2 }^2 = \sum_{ k \in \mathbb{Z} } \left( \frac { 2 \pi k } L \right)^2 | \hat f (k) |^2
	=
	\frac { 4 \pi^2 } { L^2 } \sum_{ k \in \mathbb{Z} \setminus \{ 0 \} } k^2 | \hat f (k) |^2
	.
\]
On the other hand,
we have
\[
	\langle \tilde f , \nu \rangle_{ L^2 }
	=
	\langle \tilde f , i f^\prime \rangle
	= - \sum_{ k \in \mathbb{Z} \setminus \{ 0 \} } \frac { 2 \pi k } L | \hat f ( k ) |^2
	= - \frac { 4 \pi^2 } { L^2 } \sum_{ k \in \mathbb{Z} \setminus \{ 0 \} } \frac { k L } { 2 \pi } | \hat f (k) |^2
	.
\]
Since the last right-hand side expression is a real number,
it holds that
\[
	\frac { 4 \pi^2 } { L^2 } \sum_{ k \in \mathbb{Z} \setminus \{ 0 \} } k | \hat f (k) |^2
	=
	- \frac { 2 \pi } L \Re \langle \tilde f , \nu \rangle_{ L^2 } .
\]
Consequently,
we obtain
\begin{align*}
	\frac { 4 \pi^2 } { L^2 } \sum_{ k \in \mathbb{Z} \setminus \{ 0 \} } ( k - n )^2 | \hat f (k) |^2
	= & \
	\| \nu \|_{ L^2 }^2 + \frac { 4 n \pi } L \Re \langle \tilde f , f^\prime \rangle_{ L^2 } + \left( \frac { 2 \pi n } L \right)^2 \| \tilde f \|_{ L^2 }^2
	\\
	= & \
	\left\| \frac { 2 \pi n } L \tilde f + \nu \right\|_{ L^2 }^2
	\\
	= & \
	\left\| \frac { 2 \pi n } L \left( \vecf - \frac 1L \int_0^L \vecf \, ds \right) + \vecnu \right\|_{ L^2 }^2
	.
\end{align*}
$ \widetilde I_{-1} $ vanishes if and only if
\[
	f = \hat f ( 0 ) \varphi_0 + \hat f (n) \varphi_n .
\]
Hence,
$ \mathrm{Im} \vecf $ is an $ n $-fold circle.
\qed
\end{proof}
\par
An estimate similar to Lemma \ref{Wirtinger type} holds for $ \widetilde I_{-1} $ as well.
\begin{lem}
It holds that $ 4 \pi^2 \widetilde I_{-1} \leqq I_0 $.
\label{Wirtinger type tilde}
\end{lem}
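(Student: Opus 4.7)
The plan is to argue purely in Fourier coefficients, using the representations of $\widetilde{I}_{-1}$ and $I_0$ that have already been established.

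First I would rewrite both sides of the desired inequality using the Fourier-series expressions. By the definition of $\widetilde{I}_{-1}$,
\[
	4\pi^2 \widetilde{I}_{-1}
	=
	\frac{16\pi^4}{L^3} \sum_{k \in \mathbb{Z} \setminus \{0\}} (k-n)^2 |\hat{f}(k)|^2,
\]
while by the second expression in Lemma \ref{Lemma I_0},
\[
	I_0
	=
	\frac{16\pi^4}{L^3} \sum_{k \in \mathbb{Z}} k^2 (k-n)^2 |\hat{f}(k)|^2.
\]
Since the $k=0$ term contributes nothing to the sum for $I_0$, this sum coincides with the same sum restricted to $k \in \mathbb{Z}\setminus\{0\}$.

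Next I would simply compare the two sums term by term. For every $k \in \mathbb{Z} \setminus \{0\}$ we have $k^2 \geqq 1$, hence
\[
	k^2 (k-n)^2 |\hat{f}(k)|^2 \geqq (k-n)^2 |\hat{f}(k)|^2.
\]
Summing over $k \in \mathbb{Z} \setminus \{0\}$ and multiplying by the common factor $16\pi^4/L^3$ yields $I_0 \geqq 4\pi^2 \widetilde{I}_{-1}$, which is exactly the claim.

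There is essentially no obstacle here: the only trick is to use the second (nonnegative) representation of $I_0$ from Lemma \ref{Lemma I_0}, which was itself obtained via the identity \pref{useful}. Using the first expression $I_0 = (16\pi^4/L^3)\sum k^3(k-n)|\hat f(k)|^2$ would obscure the termwise comparison, since the summands there are not individually nonnegative; the second form makes the inequality a one-line Fourier-side check.
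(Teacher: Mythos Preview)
Your proof is correct and matches the paper's approach essentially verbatim: the paper also uses the second expression of $I_0$ from Lemma \ref{Lemma I_0}, writes the difference $I_0 - 4\pi^2\widetilde I_{-1} = \frac{16\pi^4}{L^3}\sum_{k\neq 0}(k^2-1)(k-n)^2|\hat f(k)|^2$, and observes $k^2-1\geqq 0$ for $k\in\mathbb{Z}\setminus\{0\}$.
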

\begin{proof}
Since $ k^2 - 1 \geqq 0 $ for $ k \in \mathbb{Z} \setminus \{ 0 \} $,
we have
\[
	I_0 - 4 \pi^2 \widetilde I_{-1}
	=
	\frac { 16 \pi^4 } { L^3 } \sum_{ k \in \mathbb{Z} \setminus \{ 0 \} } ( k^2 - 1 ) ( k - n )^2 | \hat f (k) |^2
	\geqq
	0
	.
\]
\qed
\end{proof}
\par
The next proposition corresponds to \cite[Theorem 2.2]{Nagasawa-Nakamura1}.
\begin{prop}
It holds that
\[
	I_0
	\leqq
	\widetilde I_{-1}^{ \frac 12 }
	\left[
	\int_0^L
	L^3 \left\{ \kappa^4 + \left( \kappa^\prime \right)^2 \right\} ds
	.
	\right]
\]
\end{prop}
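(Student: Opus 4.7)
The plan is to use the vector field
\[
\vecV = \frac{2\pi n}{L}\left(\vecf - \frac{1}{L}\int_0^L \vecf\,ds\right) + \vecnu,
\]
whose squared $L^2$-norm equals $L\widetilde I_{-1}$ by the previous proposition. Using $\partial_s\vecf = \vectau$ and the Frenet formula $\partial_s\vecnu = -\kappa\vectau$, a direct computation gives the key identity
\[
\partial_s \vecV = \left(\frac{2\pi n}{L} - \kappa\right)\vectau = -\tilde\kappa\,\vectau,
\]
so in particular $\tilde\kappa = -\partial_s\vecV\cdot\vectau$.

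The crucial step I would take next is to replace one factor of $\tilde\kappa$ by $\kappa$ via the mean-zero property of $\tilde\kappa$: since $\int_0^L\tilde\kappa\,ds = 0$, we have $\int_0^L\tilde\kappa^2\,ds = \int_0^L\tilde\kappa\,\kappa\,ds$. This substitution is essential, because integrating by parts directly in $\int\tilde\kappa^2\,ds$ would force $\partial_s(\tilde\kappa\vectau) = \kappa'\vectau + \tilde\kappa\kappa\vecnu$ to appear and thus yield the weaker bound involving $\int\tilde\kappa^2\kappa^2\,ds$ instead of $\int\kappa^4\,ds$. Plugging in the identity for $\tilde\kappa$ and integrating by parts on the closed curve instead gives
\[
\int_0^L\tilde\kappa\,\kappa\,ds = -\int_0^L(\partial_s\vecV\cdot\vectau)\,\kappa\,ds = \int_0^L\vecV\cdot\partial_s(\kappa\vectau)\,ds,
\]
and since $\partial_s(\kappa\vectau) = \kappa'\vectau + \kappa^2\vecnu$, its squared $L^2$-norm is exactly $\int_0^L\{(\kappa')^2 + \kappa^4\}\,ds$.

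Applying Cauchy-Schwarz to the last display produces
\[
\int_0^L\tilde\kappa^2\,ds \leqq \|\vecV\|_{L^2}\left(\int_0^L\{(\kappa')^2 + \kappa^4\}\,ds\right)^{1/2} = (L\widetilde I_{-1})^{1/2}\left(\int_0^L\{\kappa^4 + (\kappa')^2\}\,ds\right)^{1/2},
\]
and multiplying both sides by $L$ absorbs the factors into the form $\widetilde I_{-1}^{1/2}\bigl(L^3\int_0^L\{\kappa^4+(\kappa')^2\}\,ds\bigr)^{1/2}$, which is the asserted estimate. I do not anticipate a serious obstacle; the only nontrivial point is the mean-zero trick $\int\tilde\kappa^2 = \int\tilde\kappa\,\kappa$, which steers the computation away from the weaker natural conclusion involving $\tilde\kappa^2\kappa^2$ and onto the clean integrand $\kappa^4 + (\kappa')^2$ matching $\sum_k k^6|\hat f(k)|^2$ via \pref{ser6}.
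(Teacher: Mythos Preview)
Your proof is correct, and it takes a genuinely different route from the paper's. The paper works entirely in Fourier space: it invokes the identity $I_0 = \frac{16\pi^4}{L^3}\sum_{k\ne 0} k^3(k-n)|\hat f(k)|^2$ from Lemma~\ref{Lemma I_0}, applies Schwarz' inequality to the factorization $k^3\cdot(k-n)$, and then identifies the two resulting sums with $\widetilde I_{-1}$ and with $\int_0^L\{\kappa^4+(\kappa')^2\}\,ds$ via \pref{ser6}. Your argument is the real-space counterpart: you recognize that the vector field $\vecV$ from the preceding proposition satisfies $\partial_s\vecV=-\tilde\kappa\,\vectau$, use the mean-zero trick $\int\tilde\kappa^2=\int\tilde\kappa\,\kappa$, integrate by parts against $\kappa\vectau$, and apply Cauchy--Schwarz in $L^2(\mathbb{R}/L\mathbb{Z};\mathbb{R}^2)$. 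The two are Parseval-equivalent---your $\vecV$ has Fourier coefficients proportional to $(k-n)\hat f(k)$, and $\partial_s(\kappa\vectau)$ has the same $L^2$-norm as $f'''$, whose coefficients are proportional to $k^3\hat f(k)$---so at bottom it is the same Cauchy--Schwarz. What your presentation buys is a self-contained geometric proof that avoids the Fourier apparatus and, in particular, does not need the algebraic identity \pref{useful}; the paper's version, on the other hand, makes the factorization $k^3(k-n)$ visible and fits seamlessly with the other Fourier computations in \S\ref{Estimates}. Both proofs yield the bound with the exponent $\frac12$ on the bracketed integral, which is indeed the intended statement.
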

\begin{proof}
It follows from Lemma \ref{Lemma I_0},
Schwarz' inequality,
and \pref{ser6} that
\begin{align*}
	I_0
	= & \
	\frac { 16 \pi^4 } { L^3} \sum_{ k \in \mathbb{Z} \setminus \{ 0 \} }
	k^3 ( k - n ) | \hat f (k) |^2
	\\
	\leqq & \
	\frac { 8 \pi^3 } { L^{ \frac 32 } }
	\left\{ \frac { 4 \pi^2 } { L^3 } \sum_{ k \in \mathbb{Z} \setminus \{ 0 \} }( k - n )^2 | \hat f (k) |^2 \right\}^{ \frac 12 }
	\left\{ \sum_{ k \in \mathbb{Z} \setminus \{ 0 \} } k^6 | \hat f (k) |^2 \right\}^{ \frac 12 }
	\\
	=  & \
	\frac { 8 \pi^3 } { L^{ \frac 32 } }
	\widetilde I_{-1}^{ \frac 12 }
	\left\{ \sum_{ k \in \mathbb{Z} \setminus \{ 0 \} } k^6 | \hat f (k) |^2 \right\}^{ \frac 12 }
	\\
	= & \
	\widetilde I_{-1}^{ \frac 12 }
	\left[
	\int_0^L
	L^3 \left\{ \kappa^4 + \left( \kappa^\prime \right)^2 \right\} ds
	\right]
	.
\end{align*}
\qed
\end{proof}
\par
Using this proposition,
we can prove the following estimates in a manner similar to the proof of \cite[Theorem 3.1]{Nagasawa-Nakamura1}.
\begin{thm}
Let $ j \in [ 0 , \ell ] $ be an integer.
Then,
there exists a positive constant $ C = C ( j , \ell ) $ independent of $ L $ such that
\[
	I_j
	\leqq
	C \left( \widetilde{ I }_{-1}^{ \frac { \ell - j } 2 } I_\ell
	+ \widetilde{ I }_{-1}^{ \frac { \ell - j } { \ell + 1 } }
	I_\ell^{ \frac { j + 1 } { \ell + 1 } }
	\right)
	.
\]
\label{Theorem interpolation}
\end{thm}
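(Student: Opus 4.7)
The plan is to adapt the proof of \cite[Theorem 3.1]{Nagasawa-Nakamura1} to the general rotation-number setting, using $\widetilde I_{-1}$ throughout in place of $I_{-1}$. The preceding proposition, which is the $n\geqq 1$ analog of the base inequality of that paper, is the essential new input; the remainder of the argument is scale-invariant interpolation for the zero-mean periodic function $\tilde\kappa$ on $\mathbb R / L \mathbb Z$.

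I first reduce the statement to the base case $j=0$. Because $\tilde\kappa$ has zero mean on $\mathbb R/L\mathbb Z$, standard scale-invariant Gagliardo--Nirenberg gives
\[
	I_j \leqq C\, I_0^{(\ell-j)/\ell}\, I_\ell^{j/\ell} \qquad (0 \leqq j \leqq \ell).
\]
If the bound
\[
	I_0 \leqq C\bigl( \widetilde I_{-1}^{\ell/2} I_\ell + \widetilde I_{-1}^{\ell/(\ell+1)} I_\ell^{1/(\ell+1)} \bigr)
\]
is already available, inserting it and using $(a+b)^p \leqq C_p(a^p+b^p)$ yields the two-term statement: a short arithmetic check verifies that the resulting $\widetilde I_{-1}$-exponents become $(\ell-j)/2$ and $(\ell-j)/(\ell+1)$, and the $I_\ell$-exponents become $1$ and $(j+1)/(\ell+1)$, as required.

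For the base case $j = 0$, I would square the preceding proposition to get $I_0^2 \leqq \widetilde I_{-1}\cdot L^3\int_0^L\{\kappa^4 + (\kappa')^2\}\,ds$ and bound the right-hand side in purely scale-invariant terms of $I_0,I_1,\dots,I_\ell$. Writing $\kappa = \tilde\kappa + 2\pi n/L$ and expanding, $L^3\int\kappa^4\,ds$ becomes a linear combination of scale-invariant quantities $L^{3-k}n^k\int\tilde\kappa^{4-k}\,ds$; each of these is controlled by Sobolev/Agmon-type estimates for zero-mean periodic functions, e.g.\ $\|\tilde\kappa\|_\infty^2 \leqq C\|\tilde\kappa\|_2\|\tilde\kappa'\|_2$, in terms of $I_0$ and $I_1$. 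The remaining contribution $L^3\int(\kappa')^2\,ds = I_1$ is then reduced to $I_0$ and $I_\ell$ via the scale-invariant Gagliardo--Nirenberg $I_1 \leqq C I_0^{(\ell-1)/\ell} I_\ell^{1/\ell}$. Young's inequality with exponents tuned so that every excess power of $I_0$ on the right can be absorbed into the left-hand side $I_0^2$, combined with the Wirtinger-type $4\pi^2\widetilde I_{-1} \leqq I_0$ of Lemma \ref{Wirtinger type tilde} to convert any residual bare $I_0$-factors into $\widetilde I_{-1}$-factors, yields the $j=0$ inequality after rearrangement.

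The main obstacle is this exponent bookkeeping: several cross-terms of different scale-invariant homogeneities must be matched, through carefully chosen Young exponents, to the two prescribed combinations $\widetilde I_{-1}^{\ell/2} I_\ell$ and $\widetilde I_{-1}^{\ell/(\ell+1)} I_\ell^{1/(\ell+1)}$. The calculation is essentially the one carried out in \cite[Theorem 3.1]{Nagasawa-Nakamura1}; what is new is that it now goes through for every $n \geqq 1$, because the (possibly signed) quantity $I_{-1}$ that governed the previous argument has been replaced by the non-negative $\widetilde I_{-1}$ through the preceding proposition.
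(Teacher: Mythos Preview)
Your proposal is correct and matches the paper's approach: the paper's own proof consists solely of the sentence ``Using this proposition, we can prove the following estimates in a manner similar to the proof of \cite[Theorem 3.1]{Nagasawa-Nakamura1}'', and what you have outlined is precisely that argument with $\widetilde I_{-1}$ in place of $I_{-1}$. One small caution: the inequality of Lemma~\ref{Wirtinger type tilde} goes in the direction $\widetilde I_{-1}\leqq C I_0$, so it lets you replace $\widetilde I_{-1}$-factors by $I_0$-factors (not the reverse); make sure your bookkeeping in the final absorption step respects this.
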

\subsection{Estimates for flows}
\label{Properties}
\par
In this subsection,
we derive the basic properties of the flows,
which we use in following sections.
Let $ \vecf $ be
a classical solution of one of
(AP),
(LP),
or (JP) on $ [ 0 , T ) $,
and let $ T $ be the maximum existence time.
Since $ \displaystyle{ \frac { dL } { dt } = - \int_0^L \partial_t \vecf \cdot \veckappa \, ds } $,
we have
\[
	\frac { dL^2 } { dt }
	=
	- 2 L \int_0^L \left( \tilde \kappa - \frac gL \right) \kappa \, ds
	=
	- 2 L \int_0^L \tilde \kappa^2 ds
	+
	4 \pi n g
	,
\]
that is,
\be
	\frac { d L^2 } { dt } + 2 I_0 = 4 \pi n g
	.
	\label{ d L^2 / dt }
\ee
Similarly,
we have
\be
	\frac { dA } { dt }
	=
	- \int_0^L \partial_t \vecf \cdot \vecnu \, ds
	=
	- \int_0^L \left( \tilde \kappa - \frac gL \right) ds
	=
	g
	.
	\label{ dA / dt }
\ee
It follows from the above that
\be
	\frac d { dt } \left( L^2 I_{-1} \right) + 2 I_0
	=
	\frac d { dt } \left( L^2 - 4 \pi n A \right) + 2 I_0
	= 
	0 .
	\label{ d L^2 I_{-1} / dt }
\ee
\par
From these,
we summarize the basic properties of each solution as follows.
\begin{prop}
Assume that the initial curve is smooth,
and that $ A(0) $ is positive.
Let $ \vecf $ be
a classical solution of one of
(AP),
(LP),
or (JP) on $ [ 0 , T ) $
and let $ T $ be the maximum existence time.
Then,
the following holds for $ t \in ( 0 , T ) $.
\begin{enumerate}
\item	For solutions of (AP),
\[
	\frac { dA } { dt } = 0 ,
	\quad
	A \equiv A(0) > 0 ,
	\quad
	\frac { dL^2 } { dt } \leqq 0 ,
	\quad
	\frac { d I_{-1} } { dt } \leqq 0 .
\]
\item	For solutions of (LP),
\[
	\frac { dA } { dt } \geqq 0 ,
	\quad
	A \geqq A(0) > 0 ,
	\quad
	\frac { dL^2 } { dt } = 0 ,
	\quad
	\frac { d I_{-1} } { dt } \leqq 0 .
\]
\item	For solutions of (JP),
\[
	A > 0 ,
	\quad
	\frac { d I_{-1} } { dt } \leqq 0 .
\]
\item	For solutions of (AP),
	(LP), (JP),
	\[
		1 - n \leqq I_{-1} \leqq I_{-1} (0) .
	\]
	In other words,
	\[
		4 \pi \leqq \frac { L^2 } A \leqq \frac { L(0)^2 } { A(0) } .
	\]
\end{enumerate}
\label{basic properties}
\end{prop}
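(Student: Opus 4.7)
The plan is to read each monotonicity statement directly off the three identities \pref{ d L^2 / dt }, \pref{ dA / dt }, and \pref{ d L^2 I_{-1} / dt }, which were just derived and hold for every solution of (AP), (LP), or (JP). Since $I_0 \geqq 0$ by Lemma \ref{Lemma I_0}, one only needs to substitute the specific form of $g$ in each case and check signs; parts 1--3 follow quickly, and part 4 is then obtained by combining the resulting monotonicity with the isoperimetric inequality (Lemma \ref{isoperimetric inequality}).

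For (AP), $g \equiv 0$, so \pref{ dA / dt } gives $dA/dt = 0$ and hence $A \equiv A(0) > 0$, while \pref{ d L^2 / dt } reduces to $dL^2/dt = -2I_0 \leqq 0$; differentiating $I_{-1} = 1 - 4\pi n A/L^2$ with $A$ constant then yields $dI_{-1}/dt = (4\pi n A/L^4)(dL^2/dt) \leqq 0$. For (LP), inserting $\int_0^L \kappa \, ds = 2\pi n$ into the definition of $g$ gives $g = I_0/(2\pi n) \geqq 0$, so that $dA/dt \geqq 0$ and $A \geqq A(0) > 0$; substituting into \pref{ d L^2 / dt } yields $dL^2/dt = 0$, after which \pref{ d L^2 I_{-1} / dt } collapses to $L^2 \, dI_{-1}/dt = -2I_0 \leqq 0$. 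For (JP), the sign of $g = L^2/(2A) - 2\pi n$ is not controlled a priori, so \pref{ d L^2 I_{-1} / dt } alone does not imply monotonicity of $I_{-1}$; instead, the plan is to use the (JP)-specific identity $\frac{d}{dt}\frac{L^2}{A} \leqq 0$ recalled in the introduction together with the rewriting $I_{-1} = 1 - 4\pi n/(L^2/A)$, which makes $I_{-1}$ manifestly non-increasing. The resulting bound $L^2/A \leqq L(0)^2/A(0) < \infty$, combined with $L > 0$ for any smooth closed curve, then yields $A \geqq L^2 A(0)/L(0)^2 > 0$.

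For part 4, the upper bound $I_{-1} \leqq I_{-1}(0)$ is exactly the monotonicity just established, uniformly over the three flows. The lower bound $I_{-1} \geqq 1 - n$ is algebraically equivalent to $L^2 \geqq 4\pi A$, which is Lemma \ref{isoperimetric inequality}. The equivalent formulation $4\pi \leqq L^2/A \leqq L(0)^2/A(0)$ is then immediate after substituting $I_{-1} = 1 - 4\pi n A/L^2$. The one place where the argument is not pure chain-rule bookkeeping is the (JP) case, where the indefinite sign of $g$ forces one to invoke the extra Jiang--Pan identity rather than to read monotonicity off \pref{ d L^2 I_{-1} / dt } directly; this is the main obstacle, and it is resolved entirely by the reformulation via $L^2/A$.
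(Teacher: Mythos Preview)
Your treatment of parts 1, 2, and 4 coincides with the paper's. For part 3 (the (JP) case), however, you take a genuinely different route. The paper does \emph{not} invoke the Jiang--Pan dissipation identity $\frac{d}{dt}\frac{L^2}{A} \leqq 0$ from the introduction. Instead it first proves $A > 0$ by a contradiction argument based on the concavity of $A^2$: from $\frac{dA^2}{dt} = 2Ag = L^2 I_{-1}$ and $\frac{d^2A^2}{dt^2} = \frac{d}{dt}(L^2 I_{-1}) = -2I_0 \leqq 0$ (consequences of \pref{ dA / dt } and \pref{ d L^2 I_{-1} / dt }), a first zero of $A$ is ruled out. Only afterwards does the paper compute $L^2\,\frac{dI_{-1}}{dt}$ explicitly and show it is $\leqq 0$, using the already-established positivity of $A$.

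Your approach is shorter and more conceptual: the single inequality $\frac{d}{dt}\frac{L^2}{A} \leqq 0$ simultaneously yields the monotonicity of $I_{-1} = 1 - 4\pi n\big/\frac{L^2}{A}$ and the positivity of $A$. The trade-off is that this identity is only asserted in the introduction and not derived among the identities \pref{ d L^2 / dt }--\pref{ d L^2 I_{-1} / dt } of \S\ref{Properties}; to make your proof self-contained you should insert the one-line verification $\frac{d}{dt}\frac{L^2}{A} = -\frac{2}{A}(I_0 + g^2)$, valid for $g = \frac{L^2}{2A} - R$. There is also a mild circularity in the phrasing of your $A>0$ step, since the bound $\frac{L^2}{A} \leqq \frac{L(0)^2}{A(0)}$ already presupposes $A>0$; this is harmless once recast as a continuity argument at a hypothetical first zero of $A$, which is implicit in what you wrote.
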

\begin{proof}
In the cases of (AP) and (LP),
the signs of $ \displaystyle{ \frac { dA } { dt } } $ and $ \displaystyle{ \frac { d L^2 } { dt } } $ immediately follow from \pref{ dA / dt } and \pref{ d L^2 / dt }.
Therefore,
$ A > 0 $ and
\[
	\frac { d I_{-1} } { dt }
	=
	- \frac d { dt } \frac { 4 \pi n A } { L^2 }
	=
	- \frac { 4 \pi n } { L^2 } \frac { dA } { dt }
	+ \frac {  4\pi n A } { L^4 } \frac { d L^2 } { dt }
	\leqq 0
	.
\]
In the case of (JP),
we prove the positivity of $ A $ by applying the contraction argument.
In this case,
\[
	g = \frac { L^2 I_{-1} } { 2A } .
\]
It follows from \pref{ dA / dt } that
\[
	\frac { d A^2 } { dt }
	=
	2 A g
	=
	L^2 I_{-1}
	.
\]
Assume that $ A( t_0 )^2 = 0 $ for some $ t_0 \in ( 0 , T ) $.
Since $ A^2 \geqq 0 $,
we have
\[
	\frac { d A^2 } { dt } ( t_0 ) = 0 .
\]
Since $ A ( 0 )^2 > 0 $,
there exists $ t_1 \in ( 0 , t_0 ) $ such that
\[
	\frac { d A^2 } { dt } ( t_1 ) < 0 .
\]
This contradicts \pref{ d L^2 I_{-1} / dt }:
\[
	\frac { d^2 A^2 } { dt^2 }
	=
	\frac d { dt } \left( L^2 I_{-1} \right)
	=
	- 2 I_0 \leqq 0
	.
\]
Hence,
$ A > 0 $ on $ ( 0,T ) $.
\pref{ d L^2 I_{-1} / dt } and \pref{ d L^2 / dt } yield
\begin{align*}
	L^2 \frac { d I_{-1} } { dt }
	= & \
	- I_{-1} \frac { d L^2 } { dt }
	- 2 I_0
	=
	- I_{-1} \left( 4 \pi n g - 2 I_0 \right) 
	- 2 I_0
	\\
	= & \
	- 4 \pi n \left( \frac { 2g I_{-1} } { L^2 } + I_0 A \right)
	=
	- 4 \pi n \left( \frac { I_{-1}^2 } { 2 L^4 A } + I_0 A \right)
	\leqq 0
	.
\end{align*}
Since $ I_{-1} $ is non-increasing,
we have $ I_{-1} \leqq I_{-1} (0) $.
Lemma \ref{isoperimetric inequality} gives us
\[
	I_{-1} = 1 - \frac { 4 \pi n A } { L^2 }
	=
	1 - n + n \left( 1 - \frac { 4 \pi A } { L^2 } \right)
	\geqq
	1 - n .
\]
\qed
\end{proof}
\section{Blow-up solutions}
\label{Blow-up}
\par
The non-positivity of $ I_{-1} (0) $ implies that the blow-up phenomena occurs in finite time.
\begin{thm}
Let $ \vecf $ be
a classical solution of one of
(AP),
(LP),
or (JP) on $ [ 0 , T ) $
and let $ T $ be the maximum existence time.
Assume that the initial curve is smooth,
and satisfies $ A(0) > 0 $,
$ I_{-1} (0) < 0 $.
Then,
the solution blows up in finite time.
The blow-up time $ T $ is estimated from above as follows: 
\begin{description}
\item[{\rm (AP)}]
	$ \displaystyle{ T \leqq
	\frac { L(0)^2 - 4 \pi A (0) } { - 8 \pi^2 n I_{-1} (0) } } $,
\item[{\rm (LP)}]
	$ \displaystyle{ T \leqq
	\frac { L(0)^2 - 4 \pi A (0) } { - 8 \pi^2 I_{-1} (0) } } $,
\item[{\rm (JP)}]
	$ \displaystyle{ T \leqq
	\frac { L(0)^2 } { - 8 \pi^2 nI_{-1} (0) } } $.
\end{description}
\label{Theorem blow-up}
\end{thm}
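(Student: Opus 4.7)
The plan is to combine three ingredients established earlier: the monotonicity $\frac{dI_{-1}}{dt}\leqq 0$ from Proposition \ref{basic properties} (which preserves the sign hypothesis $I_{-1}<0$), the Wirtinger-type estimate $I_0\geqq -4\pi^2 n I_{-1}$ from Lemma \ref{Wirtinger type}, and the isoperimetric inequality $L^2\geqq 4\pi A$ from Lemma \ref{isoperimetric inequality}. For each flow I choose a monotone geometric quantity whose time derivative is bounded above/below by a strictly negative/positive constant, integrate, and collide the linear-in-$t$ estimate against the isoperimetric bound to obtain a finite upper bound on the existence time.

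For (AP), since $g\equiv 0$, the identity \pref{ d L^2 / dt } becomes $\frac{dL^2}{dt}=-2I_0$. The Wirtinger bound together with $I_{-1}(t)\leqq I_{-1}(0)<0$ gives
\[
\frac{dL^2}{dt}\leqq 8\pi^2 n I_{-1}(t)\leqq 8\pi^2 n I_{-1}(0)<0,
\]
hence $L^2(t)\leqq L(0)^2+8\pi^2 n I_{-1}(0)t$. Since $A\equiv A(0)$ and Lemma \ref{isoperimetric inequality} yields $L^2\geqq 4\pi A(0)$, the solution cannot persist past the time at which the linear bound reaches $4\pi A(0)$, giving the stated estimate. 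For (LP), $L$ is constant and a direct computation gives $g=I_0/(2\pi n)$, so
\[
\frac{dA}{dt}=\frac{I_0}{2\pi n}\geqq -2\pi I_{-1}(t)\geqq -2\pi I_{-1}(0)>0,
\]
and $A(t)\geqq A(0)-2\pi I_{-1}(0)t$ must collide with the ceiling $A\leqq L(0)^2/(4\pi)$.

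For (JP) one has $g=L^2 I_{-1}/(2A)$, which is itself negative, so the equation $\frac{dL^2}{dt}=-2I_0+4\pi n g$ has both terms pulling $L^2$ down. To avoid carrying the evolving quantity $L^2/A$, I invoke the isoperimetric inequality in the form $L^2/A\geqq 4\pi$: combined with $I_{-1}<0$ this reverses direction correctly to give
\[
\frac{2\pi n L^2 I_{-1}}{A}\leqq 8\pi^2 n I_{-1}(t)\leqq 8\pi^2 n I_{-1}(0),
\]
whence $\frac{dL^2}{dt}\leqq 8\pi^2 n I_{-1}(0)<0$ and the positivity of $L^2$ forces the third bound.

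The main subtlety is the (JP) case, because $g$ has a sign that at first glance looks dangerous: the $4\pi n g$ contribution to $\frac{dL^2}{dt}$ is not a harmless perturbation but rather a genuinely non-local, $A$-dependent term. The key observation that unlocks the proof is that $L^2/A\geqq 4\pi$ and $I_{-1}<0$ combine so that this non-local term can be bounded above (not merely below) by a quantity depending only on initial data, via a single application of the isoperimetric inequality. The remaining bookkeeping -- absorbing the $-2I_0\leqq 0$ term and integrating -- is then immediate.
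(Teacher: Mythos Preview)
Your proof is correct and follows essentially the same approach as the paper: in each case you derive a constant-rate linear differential inequality for $L^2$ (or $A$) from the monotonicity of $I_{-1}$, then collide it with the isoperimetric bound $L^2\geqq 4\pi A$ (or with $L^2\geqq 0$ for (JP)). The only cosmetic difference is in the (JP) case, where the paper discards the nonpositive term $4\pi n g$ and applies the Wirtinger-type estimate to $-2I_0$, whereas you discard $-2I_0\leqq 0$ and bound $4\pi n g$ via $L^2/A\geqq 4\pi$; both routes yield the identical inequality $\frac{dL^2}{dt}\leqq 8\pi^2 n I_{-1}(0)$.
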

\begin{proof}
In the case of (AP),
$ g \equiv 0 $.
It follows from Proposition \ref{basic properties} that $ I_{-1} (t) \leqq I_{-1} (0) < 0 $.
By \pref{ d L^2 / dt } and Lemma \ref{Wirtinger type},
we have
\[
	\frac { dL^2 } { dt }
	=
	- 2 I_0 (t)
	\leqq
	8 \pi^2 n I_{-1} (t)
	\leqq
	8 \pi^2 n I_{-1} (0)
	.
\]
Integrating this from $ 0 $ to $ t \in ( 0 , T ) $,
and using Lemma \ref{isoperimetric inequality},
we obtain
\[
	4 \pi A (0) - L^2 (0)
	=
	4 \pi A (t) - L^2 (0)
	\leqq
	L^2 (t) - L^2 (0)
	\leqq
	8 \pi^4 n I_{-1} (0) t
	.
\]
Consequently,
$ t $ must satisfy
\[
	t \leqq \frac { L(0)^2 - 4 \pi A(0) } { - 8 \pi^2 n I_{-1} (0) } .
\]
\par
In the case of (LP),
$ \displaystyle{ g = \frac { I_0 } { 2 \pi n } \geqq 0 } $.
Proposition \ref{basic properties} shows $ I_{-1} (t) \leqq I_{-1} (0) < 0 $.
From \pref{ dA / dt } and Lemma \ref{Wirtinger type},
we have
\[
	- \frac { dA } { dt }
	=
	- \frac 1 { 2 \pi n } I_0 (t)
	\leqq
	2 \pi I_{-1} (t)
	\leqq
	2 \pi I_{-1} (0)
	.
\]
We integrate this from $ 0 $ to $ t \in ( 0 , T ) $.
Using Lemma \ref{isoperimetric inequality},
we obtain
\[
	4 \pi A(0) - L(0)^2
	=
	4 \pi A(0) - L(t)^2
	\leqq
	4 \pi ( A(0) - A(t) )
	\leqq
	8 \pi I_{-1} (0) t
	.
\]
Consequently,
$ t $ must satisfy
\[
	t \leqq \frac { L(0)^2 - 4 \pi A(0) } { - 8 \pi^2 I_{-1} (0) } .
\]
\par
In the case of (JP),
$ \displaystyle{ g = \frac { L^2 I_{-1} } { 2A } } $.
It follows from \pref{ d L^2 / dt },
Proposition \ref{basic properties},
and Lemma \ref{Wirtinger type} that
\[
	\frac { d L^2 } { dt }
	= - 2 I_0 (t) + \frac { 2 \pi n L (t)^2 } { A(t) } I_{-1} (t)
	\leqq
	- 2 I_0 (t)
	\leqq
	8 \pi^2 n I_{-1} (t)
	\leqq
	8 \pi^2 n I_{-1} (0)
	.
\]
We integrate this from $ 0 $ to $ t \in ( 0 , T ) $.
Using Lemma \ref{isoperimetric inequality},
we obtain
\[
	- L (0)^2
	\leqq
	L(t)^2 - L(0)^2
	\leqq
	8 \pi^2 n I_{-1} (0) t .
\]
Consequently $ t $ must satisfy
\[
	t \leqq \frac { L(0)^2 } { - 8 \pi^2 n I_{-1} (0) } .
\]
\qed
\end{proof}
\begin{cor}
Let $ \vecf $ be a classical solution of one of
(AP),
(LP),
or (JP) on $ [ 0 , T ) $
and let $ T $ be the maximum existence time.
Assume that the initial curve is smooth,
and that satisfies $ A(0) > 0 $,
and $ I_{-1} (0) = 0 $,
but it is not an $ n $-fold circle.
Then,
$ T < \infty $.
\end{cor}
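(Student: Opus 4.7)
The plan is to reduce to Theorem \ref{Theorem blow-up} by showing that $I_{-1}$ strictly drops below zero at some positive time $t_0 > 0$, and then re-applying the blow-up theorem with $t_0$ as the new initial time.

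The first step is to observe that since the initial curve is not an $n$-fold circle, the second expression in Lemma \ref{Lemma I_0} gives $I_0(0) > 0$ strictly (the sum $\sum k^2(k-n)^2|\hat f(k)|^2$ vanishes only when the only nonzero modes are $k = 0$ and $k = n$). By continuity of $I_0$ in $t$ (the solution is classical), $I_0(t) > 0$ on some interval $[0,\delta]$.

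Next, I use identity \pref{ d L^2 I_{-1} / dt }, namely $\frac{d}{dt}(L^2 I_{-1}) = -2 I_0$. Since $L(0)^2 I_{-1}(0) = 0$, integration from $0$ to $t \in (0,\delta]$ yields
\[
L(t)^2 I_{-1}(t) = -2 \int_0^t I_0(\sigma)\, d\sigma < 0,
\]
so $I_{-1}(t_0) < 0$ for any $t_0 \in (0,\delta]$.

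Finally, I verify the hypotheses of Theorem \ref{Theorem blow-up} at the new initial time $t_0$: the curve $\vecf(\cdot, t_0)$ is smooth because the solution is classical, and $A(t_0) > 0$ by Proposition \ref{basic properties} (in (AP) we have $A \equiv A(0)$; in (LP) we have $A(t_0) \geqq A(0) > 0$; in (JP) positivity of $A$ is shown directly in the proposition). Combined with $I_{-1}(t_0) < 0$, Theorem \ref{Theorem blow-up} applied with initial time $t_0$ bounds the remaining lifespan, giving $T < \infty$.

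The only non-routine step is establishing the strict positivity $I_0(0) > 0$ from the assumption that the curve is not an $n$-fold circle, but this is exactly the characterization noted right after Lemma \ref{Lemma I_0}; everything else is a direct reduction to the previous theorem.
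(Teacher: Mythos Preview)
Your proof is correct and uses the same ingredients as the paper: the identity \pref{ d L^2 I_{-1} / dt }, the characterization of the $n$-fold circle via $I_0 = 0$, and Theorem~\ref{Theorem blow-up}. The only difference is organizational: the paper argues by contradiction (assuming $T=\infty$ forces $I_{-1}\geqq 0$, while \pref{ d L^2 I_{-1} / dt } forces $I_{-1}\leqq 0$, hence $I_0\equiv 0$ and the curve is an $n$-fold circle for $t>0$, contradicting the initial condition), whereas you argue directly by producing a time $t_0>0$ with $I_{-1}(t_0)<0$ and restarting the flow there; your version is arguably cleaner since it sidesteps the continuity-at-$t=0$ issue implicit in the paper's final sentence.
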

\begin{proof}
Assume $ T = \infty $.
Then,
Theorem \ref{Theorem blow-up} implies that $ I_{-1} (t) \geqq 0 $ for all $ t \in [ 0 , \infty ) $.
On the other hand,
\pref{ d L^2 I_{-1} / dt } with $ I_{-1} ( 0 ) = 0 $ shows that $ I_{-1} (t) \leqq 0 $.
Hence,
$ I_{-1} ( t ) \equiv 0 $.
When $ t > 0 $,
\[
	\int_0^L \tilde \kappa^2 ds
	=
	\frac { I_0 } L = - \frac 1 { 2L } \frac d { dt } \left( L^2 I_{-1} \right)
	= 0 .
\]
Combining this with the rotation number $ n $,
we find that $ \mathrm{Im} \vecf (t) $ is an $ n $-fold circle.
However,
this does not satisfy the initial condition.
\qed
\end{proof}
\begin{cor}
$ \vecf $ is a classical stationary solution of one of
(AP),
(LP),
or (JP),
if and only of it is an $ n $-fold circle.
\label{stationary solution}
\end{cor}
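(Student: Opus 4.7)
The plan is to handle each direction separately: the ``if'' part by direct verification that an $ n $-fold circle satisfies $ \partial_t \vecf \equiv \veco $, and the ``only if'' part by extracting a pointwise constraint on $ \kappa $ from stationarity.

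For the ``if'' direction, I would first observe that on an $ n $-fold circle of radius $ r $ the curvature equals the constant $ 2 \pi n / L $, so $ \tilde \kappa \equiv 0 $. It then suffices to check $ g = 0 $ in each flow: for (AP) this holds by definition; for (LP) it is immediate since $ g $ is a multiple of $ \int_0^L \tilde \kappa^2 \, ds $; and for (JP) a short computation using $ L = 2 \pi n r $ and the (multiplicity-counted) signed area $ A = n \pi r^2 $ gives $ L^2 / ( 2A ) = 2 \pi n = \int_0^L \kappa \, ds $. Consequently $ \partial_t \vecf = ( \tilde \kappa - g / L ) \vecnu \equiv \veco $ for all three flows.

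For the converse, suppose $ \vecf $ is a classical stationary solution. Then $ \partial_t \vecf \equiv \veco $, and since $ | \vecnu | = 1 $ this yields the pointwise identity $ \tilde \kappa ( s ) = g / L $ on $ \mathbb{R} / L \mathbb{Z} $. The crucial point is that $ g / L $ is independent of $ s $, because $ g $ is a non-local functional of $ \vecf $, whereas $ \int_0^L \tilde \kappa \, ds = 0 $ by construction; integrating therefore forces $ g = 0 $ and $ \tilde \kappa \equiv 0 $, i.e., $ \kappa \equiv 2 \pi n / L $. To conclude I would invoke Lemma \ref{Lemma I_0}: from $ \tilde \kappa \equiv 0 $ one has $ I_0 = 0 $, so $ k^2 ( k - n )^2 | \hat f ( k ) |^2 = 0 $ for every $ k \in \mathbb{Z} $, whence $ \hat f ( k ) = 0 $ for $ k \notin \{ 0 , n \} $; as in the proof of the preceding proposition characterizing $ \widetilde I_{-1} = 0 $, this Fourier description is exactly that of an $ n $-fold circle. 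The only slightly non-routine ingredient is the non-locality of $ g $, and I do not foresee any serious obstacle beyond that.
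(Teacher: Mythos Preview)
Your proof is correct. The ``if'' direction is essentially the same as the paper's (both reduce to $\tilde\kappa\equiv 0$ and $g=0$), with only a cosmetic difference in how $g=0$ is checked for (JP): you compute $L^2/(2A)=2\pi n$ directly, while the paper observes $I_{-1}=0$ via Lemma~\ref{Lemma I_{-1}} and writes $g=L^2 I_{-1}/(2A)$.

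The ``only if'' direction does differ in route. The paper invokes the flow identity \pref{ d L^2 I_{-1} / dt }, namely $\frac{d}{dt}(L^2 I_{-1})+2I_0=0$; stationarity kills the time derivative, so $I_0=0$, and the conclusion follows. You instead read off the pointwise identity $\tilde\kappa=g/L$ from $\partial_t\vecf=\veco$, then integrate and use $\int_0^L\tilde\kappa\,ds=0$ to force $g=0$ and $\tilde\kappa\equiv 0$. Your argument is more elementary in that it avoids the derived dynamic identity and works directly with the equation; the paper's argument is slightly slicker in that it handles all three flows uniformly without ever touching the specific form of $g$. Both land on $I_0=0$ and then finish via the Fourier characterization from Lemma~\ref{Lemma I_0}.
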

\begin{proof}
Assume that $ \mathrm{Im} \vecf $ is an $ n $-fold circle.
Then,
$ \tilde kappa \equiv 0 $.
Since $ f = \hat f (0) \varphi_0 + \hat f (n) \varphi_n $,
we see $ I_0 = I_{-1} = 0 $ by Lemmas \ref{Lemma I_0} and \ref{Lemma I_{-1}}.
Hence,
$ \displaystyle{ \tilde \kappa - \frac gL \equiv 0 } $ for each case.
Consequently,
it is a stationary solution.
\par
Conversely,
assume that $ \vecf $ is a stationary solution.
It follows from \pref{ d L^2 I_{-1} / dt } that $ I_0 (t) \equiv 0 $.
Hence,
we can conclude that $ \mathrm{Im} \vecf (t) $ is an $ n $-fold circle in a manner similar to the proof of the previous corollary.
\qed
\end{proof}
\par
Let $ \vecf $ blow up at $ T \in ( 0 , \infty ) $.
Then,
we have
\[
	\limsup_{ t \to T - 0 } I_0 (t) = \infty .
\]
Indeed,
if $ \displaystyle{ \limsup_{ t \to T - 0 } I_0 (t) < \infty } $,
then $ \displaystyle{ \sup_{ t \in ( 0 , T ) } I_0 ( t ) } $ is bounded.
We can show the boundedness of $ \displaystyle{ \sup_{ t \in ( 0 , T ) } I_\ell ( t ) } $ by the standard energy method.
Using this and the equation of the flow,
we can see that $ \vecf (t) $ converges to a smooth function as $ t \to T - 0 $.
Consequently,
the solution can expand beyond $ T $.
This is a contradiction.
\par
Set
\[
	W = \int_0^L \kappa^2 ds .
\]
We will show the blow-up of $ W $ and its blow-up rate.
Firstly,
we consider the limit supremum of $ W $.
\begin{lem}
It holds that $ \displaystyle{ \limsup_{ t \to T - 0 } W(t) = \infty } $.
\label{Lemma 1st}
\end{lem}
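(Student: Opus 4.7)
My proof plan hinges on the elementary identity relating $W$ to $I_0$. Since $\tilde\kappa = \kappa - 2\pi n/L$, expanding the square gives
\[
	I_0 = L \int_0^L \tilde\kappa^2 \, ds = L W - 4\pi^2 n^2,
\]
so that $I_0 \leqq L W$. The preceding paragraph already established $\limsup_{t\to T-0} I_0(t) = \infty$ via the energy method. Hence it is enough to show that $L(t)$ stays bounded above on $[0,T)$ for each of the three flows, since then boundedness of $W$ would force boundedness of $I_0$, a contradiction.

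For (AP) and (LP) this upper bound is immediate from Proposition \ref{basic properties}: in (AP), $L^2$ is non-increasing, so $L \leqq L(0)$; in (LP), $L \equiv L(0)$. So the only substantial case is (JP), and this is where I expect the main (mild) obstacle: $L$ is not monotone and $A$ can grow, so we must extract an $L^\infty$ bound indirectly.

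My approach for (JP) is to chain two consequences of \pref{ d L^2 I_{-1} / dt }. First, from $\frac{dA^2}{dt} = L^2 I_{-1}$ together with the fact that $L^2 I_{-1}$ is non-increasing (hence bounded above by $L(0)^2 I_{-1}(0)$), I get either $A^2$ non-increasing (if $I_{-1}(0) \leqq 0$) or the linear bound $A^2(t) \leqq A(0)^2 + L(0)^2 I_{-1}(0)\, t$. In either case $A$ remains bounded on the finite interval $[0,T)$. Second, from the non-increasingness of $L^2 I_{-1} = L^2 - 4\pi n A$, I conclude
\[
	L(t)^2 \leqq L(0)^2 - 4\pi n A(0) + 4\pi n A(t),
\]
which combined with the boundedness of $A$ yields $L(t) \leqq C$ on $[0,T)$.

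To finish, suppose by contradiction that $\limsup_{t\to T-0} W(t) < \infty$. Then by the uniform upper bound on $L$ just established, the identity $I_0 = LW - 4\pi^2 n^2$ would give $\limsup_{t\to T-0} I_0(t) < \infty$, contradicting the fact recalled above. Therefore $\limsup_{t\to T-0} W(t) = \infty$.
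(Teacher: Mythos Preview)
Your proof is correct. The identity $I_0 = LW - 4\pi^2 n^2$ and the reduction to bounding $L$ from above are exactly what the paper uses for (AP) and (LP). For (JP), however, you take a genuinely shorter path than the paper. You exploit the finiteness of the blow-up time $T$: since $\frac{dA^2}{dt} = L^2 I_{-1}$ is non-increasing, $A^2$ grows at most linearly in $t$, hence is bounded on $[0,T)$; then $L^2 = (L^2 I_{-1}) + 4\pi n A \leqq L(0)^2 I_{-1}(0) + 4\pi n A$ gives the upper bound on $L$. The paper instead runs a longer contradiction argument (using the concavity $\frac{d^2 A^2}{dt^2} = -2I_0 \leqq 0$) to rule out $\liminf_{t\to T-0} A = 0$, and from this deduces the stronger conclusion that $L(t)$ actually converges to a \emph{positive} limit as $t \to T-0$. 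Your argument is cleaner for the lemma as stated; the paper's detour is not wasted, though, since the convergence of $L$ to a positive constant is invoked again in the discussion after Theorem~\ref{Theorem blow up W} and in the proof of the blow-up rate for $\min\kappa$.
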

\begin{proof}
We have
\[
	LW
	= L \int_0^L \left\{ \tilde \kappa^2 + \left( \frac RL \right)^2 \right\} ds
	= I_0 + R^2
	.
\]
Hence,
\[
	\limsup_{ t \to T - 0 } L(t) W (t) = \infty .
\]
Therefore,
the assertion immediately follows in the case of (LP).
\par
In the case of (AP),
$ L $ is non-increasing by Proposition \ref{basic properties}.
Lemma \ref{isoperimetric inequality} implies that $ L \geqq \sqrt{ 4 \pi A } = \sqrt{ 4 \pi A_0 } $.
Consequently,
$ L(t) $ converges to a positive constant as $ t \to T - 0 $,
and the assertion follows.
\par
We show that $ L(t) $ converges to a positive constant in the case of (JP) as well.
We assume that $ \displaystyle{ \liminf_{ t \to T - 0 } A (t) = 0 } $.
$ I_{-1} $ is monotone by Proposition \ref{basic properties}.
Therefore,
it follows from
\[
	\frac { dA } { dt } = \frac { L^2 } { 2A } I_{-1}
\]
that $ A $ does not oscillate near $ t = T $.
Hence,
we may assume $ \displaystyle{ \lim_{ t \to T - 0 } A (t) = 0 } $.
From the above relation and Proposition \ref{basic properties},
we find that $ \displaystyle{ \frac { dA } { dt } } $ is bounded.
Consequently,
the estimate
\[
	0 < A(t) \leqq C ( T - t )
\]
holds.
Thus,
we have
\[
	0 \leqq
	\frac { A(t)^2 } { T - t }
	\leqq
	\frac { C ( T - t )^2 } { T - t } \to 0 \mbox{ as } t \to T - 0
	,
\]
and therefore,
\[
	\lim_{ t \to T - 0 }
	\frac { A(T-0)^2 - A(t)^2 } { T - t } = 0
	.
\]
This implies that
\[
	\frac { d A^2 } { dt } ( T - 0 ) = 0 .
\]
However,
$ A(0)^2 > 0 $ and $ A( T - 0 )^2 = 0 $ show the existence of $ t_\ast \in ( 0 , T ) $ such that
\[
	\frac { d A^2 } { dt } ( t_\ast ) < 0 .
\]
This contradicts the following:
\[
	\frac { d^2 A^2 } { dt^2 } = - 2 I_0 \leqq 0 .
\]
Now,
we prove $ \displaystyle{ \liminf_{ t \to \infty } A( t ) > 0 } $.
Since
\[
	\frac { d A } { dt } = \frac { L^2 } { 2A } I_{-1}
\]
has a constant sign near $ T - 0 $,
we conclude that
$ \displaystyle{ \lim_{ t \to \infty } A( t ) > 0 } $.
The convergence of $ \displaystyle{ \lim_{ t \to \infty } L( t ) } $ follows from the convergence of $ A $,
and the monotonicity and boundedness of $ I_{-1} $.
Since $ \displaystyle{ \frac { L^2 } A } $ is strictly positive by Proposition \ref{basic properties},
the limit of $ L $ is positive.
\qed
\end{proof}
\par
Next,
we derive the time derivative of $ W $.
Set
\[
	R = \int_0^L \kappa \, ds ,
	\quad
	J_p =
	L^{ p-1 } \int_0^L \tilde \kappa^p ds
	\quad ( p \in \mathbb{N} \setminus \{ 1 \} )
	,
\]
which are scale-invariant quantities.
Note that $ I_0 = J_2 $.
\begin{lem}
It holds that
\[
	\frac { d W } { dt }
	=
	\frac 1 { L^3 }
	\left\{
	- 2 I_1 + J_4 + ( 3R - g ) J_3
	+ 3R ( R - g ) J_2
	-R^3 g
	\right\}
	.
\]
\label{Lemma 2nd}
\end{lem}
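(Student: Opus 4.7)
The plan is to differentiate $W = \int_0^L \kappa^2\, ds$ along the flow, use the standard evolution formulas for $ds$ and $\kappa$ under a normal variation, and then re-expand everything in terms of $\tilde\kappa$ and $R = \int_0^L \kappa\, ds$ so that the scale-invariant quantities $J_p$ appear.

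First, since $\partial_t \vecf = V \vecnu$ with $V = \tilde\kappa - g/L$, the textbook commutation formulas for a normal flow of curves give
\[
\partial_t(ds) = -\kappa V\, ds, \qquad \partial_t \kappa = \partial_s^2 V + \kappa^2 V,
\]
and the key observation that $g$ and $L$ do not depend on $s$, so $\partial_s V = \partial_s \tilde\kappa = \kappa'$. Applying these to $W$ yields
\[
\frac{dW}{dt} = \int_0^L \bigl(2\kappa\, \partial_t\kappa - \kappa^3 V\bigr)\, ds
= \int_0^L 2\kappa\, \partial_s^2 V\, ds + \int_0^L \kappa^3 V\, ds.
\]

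Next I would handle the two integrals separately. The first one, by integration by parts on the closed curve, equals $-2\int_0^L (\kappa')^2\, ds = -2 I_1/L^3$, since $\tilde\kappa' = \kappa'$ and $I_1 = L^3\int_0^L (\tilde\kappa')^2\, ds$. For the second one, the natural step is to substitute $\kappa = \tilde\kappa + R/L$ and expand. Using $\int_0^L \tilde\kappa\, ds = 0$ and $\int_0^L \tilde\kappa^p\, ds = J_p/L^{p-1}$, one obtains
\[
\int_0^L \kappa^3 \tilde\kappa\, ds = \frac{1}{L^3}\bigl(J_4 + 3R J_3 + 3R^2 J_2\bigr),
\quad
\int_0^L \kappa^3\, ds = \frac{1}{L^2}\bigl(J_3 + 3R J_2 + R^3\bigr),
\]
so that
\[
\int_0^L \kappa^3 V\, ds = \frac{1}{L^3}\bigl[J_4 + (3R - g) J_3 + 3R(R - g) J_2 - R^3 g\bigr].
\]
Adding the two contributions produces the formula in the statement.

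The computation is routine once one has the evolution equations and the bookkeeping conventions; the only mild obstacle is keeping the $R/L$ shifts correctly tracked when passing from powers of $\kappa$ to powers of $\tilde\kappa$, in particular noticing that the $J_3$ coefficient combines a $+3R$ from the expansion of $\kappa^3\tilde\kappa$ with a $-g$ from the $-g\kappa^3/L$ term, and similarly for the $J_2$ and constant terms. No new analytical input is required beyond the scale-invariant algebra and the preserved identity $R = 2\pi n$.
\qed
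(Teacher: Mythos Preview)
Your proof is correct and essentially the same as the paper's. The paper starts from the first-variation identity $\frac{dW}{dt} = \int_0^L V\,(2\partial_s^2\kappa + \kappa^3)\,ds$ (the $L^2$-gradient of the elastic energy), while you derive the same two-term split $-2\int(\kappa')^2\,ds + \int \kappa^3 V\,ds$ from the evolution formulas $\partial_t(ds) = -\kappa V\,ds$ and $\partial_t\kappa = \partial_s^2 V + \kappa^2 V$; after that, both arguments expand $\kappa = \tilde\kappa + R/L$ and collect the $J_p$ in the same way.
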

\begin{proof}
The proof is a direct calculation:
\begin{align*}
	\frac { dW } { dt }
	= & \
	\int_0^L
	\partial_t \vecf \cdot \left( 2 \partial_s^2 \kappa + \kappa^3 \right) ds
	=
	\int_0^L
	\left( \tilde \kappa - \frac gL \right)
	\left( 2 \partial_s^2 \kappa + \kappa^3 \right)
	ds
	\\
	= & \
	- 2 \int_0^L \left( \partial_s^2 \tilde \kappa \right)^2 ds
	+
	\int_0^L
	\left( \tilde \kappa - \frac gL \right)
	\left( \tilde \kappa + \frac RL \right)^3
	ds
	\\
	= & \
	- \frac { 2 I_1 } { L^3 }
	+
	\int_0^L
	\left(
	\tilde \kappa^3
	+
	\frac { 3 R \tilde \kappa^2 } L
	+
	\frac { 3 R^2 \tilde \kappa } { L^2 }
	+
	\frac { R^3 } { L^3 }
	\right)
	\left( \tilde \kappa - \frac gL \right)
	ds
	\\
	= & \
	- \frac { 2 I_1 } { L^3 }
	+
	\int_0^L
	\left\{
	\tilde \kappa^4
	+
	\left( \frac { 3R } L - \frac gL \right) \tilde \kappa^3
	+
	\left( \frac { 3 R^2 } { L^2 } - \frac { 3R g } { L^2 }
	\right) \tilde \kappa^2
	-
	\frac { R^3 g } { L^4 }
	\right\}
	ds
	\\
	= & \
	\frac 1 { L^3 }
	\left\{
	- 2 I_1 + J_4 + ( 3R - g ) J_3 + 3R ( R - g ) J_2 - R^3 g
	\right\}
	.
\end{align*}
\qed
\end{proof}
\par
Thirdly,
we estimate $ \frac { dW } { dt } $ from above.
\begin{lem}
We have
\[
	\frac { dW } { dt }
	\leqq
	\frac { W^3 } { 2M^2 }
	.
\]
Here,
\[
	M
	=
	\left\{
	\begin{array}{ll}
	\displaystyle{
	C
	}
	& \mbox{for (AP) and (LP)} ,
	\\
	\displaystyle{
	C \left\{
	1
	+
	\left( \frac { L_0^2 } { A_0 } \right)^{ \frac 43 }
	\right\}^{ - \frac 12 }
	}
	\quad
	& \mbox{for (JP)}
	\end{array}
	\right.
\]
with the constant $ C $ being independent of the initial curve and the rotation number.
\label{Lemma 3rd}
\end{lem}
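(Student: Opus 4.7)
The starting point is the identity from Lemma \ref{Lemma 2nd}, which reads
$$L^3 \frac{dW}{dt} = -2I_1 + J_4 + (3R - g)J_3 + 3R(R - g)J_2 - R^3 g,$$
with $R = 2\pi n$ and $J_2 = I_0$. Since $LW = I_0 + R^2$, the target inequality $\frac{dW}{dt} \leq \frac{W^3}{2M^2}$ becomes, after multiplication by $L^3$, the assertion that the right-hand side above is bounded by $(I_0+R^2)^3/(2M^2)$. The plan is to estimate every term other than $-2I_1$ by Gagliardo--Nirenberg interpolation for $\tilde\kappa$ and then to absorb the resulting powers of $I_1$ into the favourable term $-2I_1$ via Young's inequality.

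The analytic ingredient is the Gagliardo--Nirenberg inequality on $\mathbb{R}/L\mathbb{Z}$ applied to the zero-mean function $\tilde\kappa$, which in scale-invariant form reads
$$J_4 \leq C\, I_1^{1/2} J_2^{3/2}, \qquad |J_3| \leq C\, I_1^{1/4} J_2^{5/4}.$$
Young's inequality then converts these into $J_4 \leq \epsilon I_1 + C_\epsilon J_2^3$ and, for any scalar $\lambda \geq 0$, $\lambda |J_3| \leq \epsilon I_1 + C_\epsilon \lambda^{4/3} J_2^{5/3}$. To repackage the remaining monomials I would use the elementary bounds $J_2 \leq LW$ and $R^2 \leq LW$, which give $J_2^a R^b \leq (LW)^{a+b/2}$, together with $LW \geq R^2 \geq 4\pi^2$, allowing any sub-cubic power $(LW)^q$ with $q \leq 3$ to be dominated by a universal multiple of $(LW)^3$.

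For \textrm{(AP)} the term $g$ vanishes, so only $J_4$, $3RJ_3$, and $3R^2 J_2$ survive; applying the estimates of the previous paragraph and choosing $\epsilon$ small gives $L^3 \frac{dW}{dt} \leq C(LW)^3$ with $C$ independent of the initial datum and of $n$, which is the claim with $M$ a universal constant. For \textrm{(LP)}, $g = J_2/R \geq 0$, and expanding $(3R-g)J_3 + 3R(R-g)J_2 - R^3 g$ reproduces the \textrm{(AP)} terms together with $-R^{-1} J_2 J_3$ and $-3J_2^2$; the latter is non-positive and the former is treated by the same Gagliardo--Nirenberg--Young argument, so the same universal constant works.

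The main obstacle is \textrm{(JP)}, where $g = L^2 I_{-1}/(2A)$ is not a priori bounded. Here I would first use Lemma \ref{Wirtinger type} to obtain $|g| \leq L^2 I_0/(8\pi^2 n A)$ and then part 4 of Proposition \ref{basic properties} to replace $L^2/A$ by the constant $L_0^2/A_0$. The decisive term is $gJ_3$: combining the Gagliardo--Nirenberg bound on $J_3$ with Young's inequality of exponents $4$ and $4/3$ applied to the product $(L^2/A)\,I_0 \cdot I_1^{1/4} J_2^{5/4}$ yields a contribution of order $(L_0^2/A_0)^{4/3}(LW)^3$, which is exactly what produces the exponent $4/3$ in the stated $M$. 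The remaining terms $R g J_2$ and $R^3 g$ carry only the first power of $L^2/A$ and are subdominant. Collecting all contributions and fixing $\epsilon$ small enough that the accumulated $I_1$-coefficient is absorbed into $-2I_1$ gives $L^3 \frac{dW}{dt} \leq C \{1+(L_0^2/A_0)^{4/3}\}(LW)^3$, which rearranges to the claimed bound with $M = C \{1+(L_0^2/A_0)^{4/3}\}^{-1/2}$. The delicate point is confirming that no cross term secretly forces a higher power of $L_0^2/A_0$ than $4/3$, which requires tracking the exponents in every Young step for the \textrm{(JP)} case.
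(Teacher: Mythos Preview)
Your argument is correct and follows essentially the same route as the paper: apply Gagliardo--Nirenberg to bound $J_4$ and $|J_3|$ by powers of $I_0$ and $I_1$, absorb the $I_1$-contributions into $-2I_1$ via Young's inequality, and then convert the remaining monomials into multiples of $(LW)^3$ using $I_0\leqq LW$, $R^2\leqq LW$, and $LW\geqq R^2\geqq 4\pi^2$. The only noteworthy difference is in the (JP) step: the paper substitutes $g=\frac{L^2}{2A}-R$ directly, moves the sign-definite pieces $\frac{3RL^2}{2A}J_2$ and $\frac{R^3L^2}{2A}$ to the left, and is left with a single $(L^2/A)$-coefficient in front of $J_3$, whereas you reach the same $(L_0^2/A_0)^{4/3}$ by first invoking Lemma~\ref{Wirtinger type} to bound $|g|\leqq \frac{L^2 I_0}{8\pi^2 n A}$---both routes give identical exponents and $n$-independent constants.
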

\begin{proof}
Here,
we use Lemma \ref{Lemma 2nd}.
In the case of (AP),
since $ g = 0 $,
we have
\[
	\frac { dW } { dt }
	+
	\frac { 2 I_1 } { L^3 }
	=
	\frac 1 { L^3 } \left( J_4 + 3R J_3 + 3 R^2 J_2 \right)
	.
\]
Set $ \theta = \frac 12 - \frac 1p $.
Then,
Gagliardo-Nirenberg's inequality yields
\[
	| J_p |
	\leqq
	C \left( I_0^{ 1 - \theta } I_1^\theta \right)^{ \frac p2 }
	=
	C I_0^{ \frac p4 + \frac 12 } I_1^{ \frac p4 - \frac 12 }
	.
\]
Hence,
\begin{align*}
	\frac { dW } { dt }
	+ \frac { 2 I_1 } { L^3 }
	\leqq & \
	\frac C { L^3 }
	\left(
	I_0^{ \frac 32 } I_1^{ \frac 12 }
	+
	R I_0^{ \frac 54 } I_1^{ \frac 14 }
	+
	R^2 I_0
	\right)
	\\
	\leqq & \
	\frac { I_1 } { L^3 }
	+
	\frac C { L^3 }
	\left(
	I_0^3 + R^{ \frac 43 } I_0^{ \frac 53 }+ R^2 I_0
	\right)
	.
\end{align*}
Owing of $ 0 \leqq I_0 \leqq LW $ and $ R^2 \leqq LW $,
we obtain
\begin{gather*}
	I_0^3 \leqq L^3 W^3
	,
	\quad
	I_0^{ \frac 53 } \leqq L^{ \frac 53 }W^{ \frac 53 }
	=
	( LW )^{ - \frac 43 } L^3 W^3
	\leqq
	R^{ - \frac 83 } L^3 W^3
	,
	\\
	I_0 \leqq LW = ( LW )^{ -4 } L^3 W^3 \leqq R^{ -8 } L^3 W^3
	.
\end{gather*}
Furthermore,
\[
	R = 2 \pi n \geqq 2 \pi .
\]
Consequently,
we conclude that
\[
	\frac { dW } { dt }
	\leqq
	C
	\left(
	1
	+
	R^{ - \frac 43 }
	+
	R^{ -6 }
	\right) W^3
	\leqq
	C W^3
	.
\]
\par
In the case of (LP),
since $ \displaystyle{ g = \frac { I_0 } R } $,
we have
\begin{align*}
	\frac { dW } { dt }
	+ \frac 1 { L^3 }
	\left( 2 I_1 + 3 I_0^2 + R^2 I_0 \right)
	= & \
	\frac 1 { L^3 }
	\left\{
	J_4 + \left( 3R - \frac { I_0 } R \right) J_3 + 3R^2 I_0
	\right\}
	\\
	\leqq & \
	\frac C { L^3 }
	\left(
	I_0^{ \frac 32 } I_1^{ \frac 12 }
	+
	R I_0^{ \frac 54 } I_1^{ \frac 14 }
	+
	R^{-1} I_0^{ \frac 94 } I_1^{ \frac 14 }
	+
	R^2 I_0
	\right)
	\\
	\leqq & \
	\frac { I_1 } { L^3 }
	+
	\frac C { L^3 }
	\left(
	I_0^3
	+
	R^{ \frac 43 } I_0^{ \frac 53 }
	+
	R^{ - \frac 43 } I_0^3
	+
	R^2 I_0
	\right)
	\\
	\leqq & \
	\frac { I_1 } { L^3 }
	+
	C
	\left(
	1
	+
	R^{ - \frac 43 }
	+
	R^{ -6 }
	\right) W^3
	\\
	\leqq & \
	\frac { I_1 } { L^3 }
	+
	C W^3
	.
\end{align*}
\par
In the case of (JP),
since $ \displaystyle{ g = \frac { L^2 } { 2A } - R } $,
we have
\begin{align*}
	&
	\frac { dW } { dt }
	+
	\frac 1 { L^3 }
	\left(
	2 I_1
	+
	\frac { 3R L^2 } { 2A } I_0
	+
	\frac { R^3 L^2 } { 2A }
	\right)
	\\
	& \quad
	=
	\frac 1 { L^3 }
	\left\{
	J_4 + \left( 3R - \frac { L^2 } { 2A } + R \right) J_3
	+ 6R^2 J_2
	+ R^4
	\right\}
	\\
	& \quad
	\leqq
	\frac C { L^3 }
	\left\{
	I_0^{ \frac 32 } I_1^{ \frac 12 }
	+
	R I_0^{ \frac 54 } I_1^{ \frac 14 }
	+
	\frac { L^2 } A I_0^{ \frac 54 } I_1^{ \frac 14 }
	+
	R^2 I_0
	+
	R^{-2} ( LW )^3
	\right\}
	\\
	& \quad
	\leqq
	\frac { I_1 } { L^3 }
	+
	\frac C { L^3 }
	\left[
	I_0^3
	+
	\left\{ R + \left( \frac { L^2 } A \right) \right\}^{ \frac 43 } I_0^{ \frac 53 }
	+
	R^2 I_0
	+
	R^{-2} L^3 W^3
	\right]
	\\
	& \quad
	\leqq
	\frac { I_1 } { L^3 }
	+
	C
	\left[
	1
	+
	\left\{ \frac 1 R + \left( \frac { L^2 } { R^2 A } \right) \right\}^{ \frac 43 }
	+
	R^{-6}
	+
	R^{-2}
	\right]
	W^3
	\\
	& \quad
	\leqq
	\frac { I_1 } { L^3 }
	+
	C
	\left\{
	1
	+
	\left( \frac { L^2 } { A } \right)^{ \frac 43 }
	\right\}
	W^3
	.
\end{align*}
By Proposition \ref{basic properties},
we have
\[
	\left( \frac { L^2 } A \right)^{ \frac 43 }
	\leqq
	\left( \frac { L_0^2 } { A_0 } \right)^{ \frac 43 }
	.
\]
Consequently,
we can conclude that
\[
	\frac { dW } { dt }
	\leqq
	C
	\left\{
	1
	+
	\left( \frac { L_0^2 } { A_0 } \right)^{ \frac 43 }
	\right\}
	W^3
	.
\]
\qed
\end{proof}
\par
Now,
we prove the following theorem.
\begin{thm}
Let $ T \in ( 0 , T ) $ be the blow-up time for a solution of one of (AP),
(LP),
or (JP).
Then,
$ W(t) $ blows up as
\[
	W (t) \geqq \frac M { \sqrt{ T - t } } ,
\]
where
\[
	M
	=
	\left\{
	\begin{array}{ll}
	\displaystyle{
	C
	}
	& \mbox{for (AP) and (LP)} ,
	\\
	\displaystyle{
	C \left\{
	1
	+
	\left( \frac { L_0^2 } { A_0 } \right)^{ \frac 43 }
	\right\}^{ - \frac 12 }
	}
	\quad
	& \mbox{for (JP)}
	\end{array}
	\right.
\]
with a constant $ C $ that is independent of the initial curve and the rotation number.
\label{Theorem blow up W}
\end{thm}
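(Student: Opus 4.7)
The plan is to integrate the differential inequality provided by Lemma \ref{Lemma 3rd} as a separable ODE inequality in $W$, and then to pass to the limit $t \to T-0$ using the blow-up of $W$ guaranteed by Lemma \ref{Lemma 1st}.

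More concretely, since $W(t)>0$ for a non-trivial closed curve (note that $W=0$ would force $\kappa\equiv 0$, impossible for a closed curve), the inequality $\frac{dW}{dt}\leq \frac{W^3}{2M^2}$ from Lemma \ref{Lemma 3rd} is equivalent to
\[
	\frac{d}{dt}\left(\frac{1}{W(t)^2}\right)
	=
	-\frac{2}{W(t)^3}\,\frac{dW}{dt}
	\geq
	-\frac{1}{M^2}.
\]
Integrating this from $t$ to any $s\in(t,T)$ yields
\[
	\frac{1}{W(s)^2}
	-
	\frac{1}{W(t)^2}
	\geq
	-\frac{s-t}{M^2}.
\]

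Next I would let $s\to T-0$ along a suitable sequence. By Lemma \ref{Lemma 1st} there exists a sequence $s_j\to T-0$ with $W(s_j)\to\infty$, so $\liminf_{s\to T-0}W(s)^{-2}=0$. Passing to the limit along $\{s_j\}$ in the above inequality gives
\[
	0 \geq \frac{1}{W(t)^2}-\frac{T-t}{M^2},
\]
which rearranges to $W(t)\geq M/\sqrt{T-t}$, exactly as claimed. The constant $M$ is the one supplied by Lemma \ref{Lemma 3rd}, which in the case of (JP) involves $L_0^2/A_0$ through the monotonicity of $L^2/A$ recorded in Proposition \ref{basic properties}.

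Essentially there is no hard step here: the proof is a one-line ODE comparison plus the blow-up information for $W$. The only mild subtlety is that Lemma \ref{Lemma 1st} only gives $\limsup W=\infty$ rather than $\lim W=\infty$, so one should take care to use a sequence $s_j\to T-0$ along which $W(s_j)\to\infty$ and then invoke $\liminf W(s)^{-2}=0$, rather than writing $\lim W(s)^{-2}=0$. All the real work has already been done in establishing Lemma \ref{Lemma 3rd}.
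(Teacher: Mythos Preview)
Your proof is correct and is essentially identical to the paper's own argument: both rewrite Lemma~\ref{Lemma 3rd} as $\frac{d}{dt}W^{-2}\geqq -M^{-2}$, integrate from $t$ to a point near $T$, and pass to the limit along a sequence $t_n\to T-0$ with $W(t_n)\to\infty$ supplied by Lemma~\ref{Lemma 1st}. Your remark about needing only a sequence (since Lemma~\ref{Lemma 1st} gives $\limsup$) is exactly the care the paper takes as well.
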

\begin{proof}
It follows from Lemma \ref{Lemma 3rd} that
\[
	\frac d { dt } W^{-2}
	\geqq
	- M^{-2}
	.
\]
Due to Lemma \ref{Lemma 1st},
there exists a sequence $ \{ t_n \} $ such that $ t_n \to T - 0 $ and $ W( t_n )^{-2} \to 0 $ as $ n \to \infty $.
Integrating the differential inequality from $ t $ to $ t_n $,
we have
\[
	W(t)^{ -2 } - W ( t_n )^{ -2 }
	\leqq
	M^{-2} ( t_n - t )
	.
\]
Therefore,
we obtain the theorem as $ n \to \infty $.
\qed
\end{proof}
\par
The curve $ \mathrm{Im} \vecf $ may have several loops.
When the orientation of a loop is counter-clockwise as $ s $ increases,
it is called a {\it positive} loop.
Otherwise,
it is called a {\it negative} loop.
It has already been shown that $ L(t) $ converges to a positive constant as $ t \to \infty $.
Therefore,
from the above theorem we know that
\[
	\lim_{ t \to T - 0 }
	\max_{ s \in \mathbb{R} / L(t) \mathbb{Z} } \kappa ( s , t )
	= \infty
\]
or
\[
	\lim_{ t \to T - 0 }
	\min_{ s \in \mathbb{R} / L(t) \mathbb{Z} } \kappa ( s , t )
	= - \infty
	.
\]
If a positive/negative loop of $ \mathrm{Im} \vecf $ shrinks as $ t \to T - 0 $,
the maximum/minimum value of the curvature may not remain bounded.
On the other hand,
there is a possibility of the maximum or minimum remaining bounded as $ t \to T - 0 $.
For example,
if a negative loop shrinks as $ T \to \infty $ before the positive loops shrink,
the minimum value of the curvature goes to $ - \infty $,
but the maximum remains bounded.
In the last part of this section,
we discuss the blow-up of the maximum and minimum.
\begin{thm}
Let $ T \in ( 0 , \infty ) $ be the blow-up time for a solution of one of (AP),
(LP),
or (JP).
Assume that
\[
	\limsup_{ t \to T - 0 } \max_{ s \in \mathbb{R} / L(t) \mathbb{Z} } \kappa ( s , t )
	= \infty ,
\]
then it satisfies
\[
	\max_{ s \in \mathbb{R} / L(t) \mathbb{Z} } \kappa ( s , t )
	\geqq
	\frac 1 { \sqrt { 2 ( T - t ) } }
	.
\]
\end{thm}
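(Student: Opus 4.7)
The plan is to derive an evolution equation for the curvature $\kappa$ under the flow, apply the standard Hamilton maximum-principle trick to reduce the growth of $\kappa_{\max}(t) := \max_{s} \kappa(s,t)$ to an ODE, and then integrate via the same Bernoulli-type substitution used in the proof of Theorem \ref{Theorem blow up W}.

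First I would compute $\partial_t \kappa$ using the standard formula for a normal flow $\partial_t \vecf = V\vecnu$, namely $\partial_t \kappa = \partial_s^2 V + V\kappa^2$. Setting $V = \tilde\kappa - g/L$, using the spatial constancy of $g/L$, and writing $\tilde\kappa = \kappa - R/L$, this yields
\[
  \partial_t \kappa = \partial_s^2 \kappa + \kappa^3 - \frac{R+g}{L}\,\kappa^2.
\]
At a point realizing the spatial maximum, $\partial_s\kappa = 0$ and $\partial_s^2\kappa \leq 0$, so by Hamilton's trick $\kappa_{\max}$ is Lipschitz in $t$ and satisfies, almost everywhere,
\[
  \frac{d\kappa_{\max}}{dt} \leq \kappa_{\max}^3 - \frac{R+g}{L}\,\kappa_{\max}^2.
\]

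The key step is then to verify $R + g \geq 0$ for each flow. For (AP), $g \equiv 0$ and $R = 2\pi n > 0$. For (LP), $g = I_0/R \geq 0$. For (JP), $g = L^2/(2A) - R$, so $R + g = L^2/(2A) > 0$ by Proposition \ref{basic properties}, which guarantees $A > 0$. In every case the ODE becomes the cleaner inequality
\[
  \frac{d\kappa_{\max}}{dt} \leq \kappa_{\max}^3,
\]
equivalently $\frac{d}{dt}\kappa_{\max}^{-2} \geq -2$ at points where $\kappa_{\max} > 0$.

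By hypothesis there is a sequence $t_n \to T-0$ with $\kappa_{\max}(t_n) \to \infty$. For any $t$ sufficiently close to $T$ where $\kappa_{\max}(t) > 0$, the differential inequality for $\kappa_{\max}^{-2}$ prevents $\kappa_{\max}^{-2}$ from vanishing on $[t, t_n]$, so $\kappa_{\max}$ remains positive there. Integrating from $t$ to $t_n$ yields
\[
  \kappa_{\max}(t)^{-2} \leq \kappa_{\max}(t_n)^{-2} + 2(t_n - t),
\]
and letting $n \to \infty$ gives $\kappa_{\max}(t)^{-2} \leq 2(T-t)$, which is the claim. The main obstacle is the (JP) case, since there $g$ has no a priori sign; the algebraic identity $R + g = L^2/(2A)$ combined with the positivity $A > 0$ from Proposition \ref{basic properties} is what makes the argument uniform across the three flows.
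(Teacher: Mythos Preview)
Your proof is correct and follows essentially the same route as the paper: both compute the evolution of $\kappa$, verify $R+g \geq 0$ in all three cases (using $R+g = L^2/(2A) > 0$ for (JP) via Proposition~\ref{basic properties}), deduce $d\kappa_{\max}/dt \leq \kappa_{\max}^3$ at a spatial maximum, and integrate $d(\kappa_{\max}^{-2})/dt \geq -2$ along a sequence $t_n \to T-0$ with $\kappa_{\max}(t_n)\to\infty$. The only cosmetic difference is that the paper phrases the maximum-principle step via upper Dini derivatives and cites \cite[Lemma~B.40]{CLN} and \cite[Theorem~3]{HT} for the integration, whereas you invoke Hamilton's trick to obtain Lipschitz continuity and a.e.\ differentiability directly.
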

\begin{proof}
Set
\begin{align*}
	K (t) = & \
	\max_{ s \in \mathbb{R} / L(t) \mathbb{Z} } \kappa ( s , t )
	,
	\\
	\frac { d^+ K } { dt } (t)
	= & \
	\limsup_{ h \to + 0 } \frac { K( t + h ) - K(t) } h
	.
\end{align*}
Define the set $ S_t $ by $ S_t = \{ s \in \mathbb{R} / L(t) \mathbb{Z} \, | \, \kappa ( s,t ) = K(t) \} $.
After re-parametrizing $ \vecf ( \cdot , t ) $ by a new parameter that is independent of $ t $,
we apply \cite[Lemma B.40]{CLN}.
Consequently ,
we can conclude that $ K $ is a continuous function of $ t $,
and that
\[
	\frac { d^+ K } { dt } (t)
	=
	\max_{ s \in S_t } \partial_t \kappa (s,t) .
\]
$ \kappa $ satisfies the equation
\[
	\partial_t \kappa
	=
	\partial_s^2 \kappa + \kappa^2 \left( \tilde \kappa - \frac g L \right)
	=
	\partial_s^2 \kappa + \kappa^2 \left( \kappa - \frac { R + g } L \right)
	.
\]
For the cases of (AP) and (LP),
$ R + g > 0 $ as $ R > 0 $ and $ g \geqq 0 $.
In the case of (JP),
\[
	R + g
	=
	\frac { L^2 } A \geqq 0
	.
\]
$ \partial_s^2 \kappa \leqq 0 $ holds for $ s \in S_t $.
Hence,
we have
\[
	\partial_s^2 \kappa + \kappa^2 \left( \kappa - \frac { R + g } L \right)
	\leqq
	\kappa^3
	=
	K^3
\]
for $ s \in S_t $,
and
\[
	\frac { d^+ K } { dt } (t)
	\leqq
	\max_{ s \in S_t } \partial_t \kappa
	\leqq
	K^3 (t)
	.
\]
We calculate Dini's derivative of $ K^{-2} $ as
\begin{align*}
	\frac { d^+ } { dt } K^{-2} (t)
	= & \
	\limsup_{ h \to + 0 } \frac { K^{-2} ( t + h ) - K^{-2} (t) } h
	\\
	= & \
	\limsup_{ h \to + 0 }
	\frac { ( K ( t ) + K ( t + h ) )( K ( t ) - K ( t + h ) ) }
	{ K^2 ( t + h ) K^2 (t) h }
	\\
	= & \
	- 2 K^{-3} (t)
	\liminf_{ h \to + 0 } \frac { K( t + h ) - K(t) } h
	\\
	\geqq & \
	- 2 K^{-3} (t)
	\limsup_{ h \to + 0 } \frac { K( t + h ) - K(t) } h
	\\
	= & \
	- 2 K^{-3} (t)
	\frac { d^+ K } { dt } (t)
	\geqq
	- 2
	.
\end{align*}
According to the assumption of the theorem,
there exists a sequence $ \{ t_k \}_{ k \in \mathbb{N} } $ such that $ t_k \to T - 0 $ and $ K( t_k )^{-2} \to 0 $ as $ k \to \infty $.
Using \cite[Theorem 3]{HT},
we have
\[
	K^{-2} ( t_k ) - K^{-2} (t)
	\geqq
	\wideubar{\int}_t^{ t_k }
	\frac { d^+ } { dt } K^{-2} (t) \, dt
	\geqq
	- 2 ( t_k - t )
\]
for $ t_k \in ( t , T ) $.
Therefore,
we can conclude that
\[
	K^{-2} (t)
	\leqq 2 ( T - t )
\]
by $ k \to \infty $
\qed
\end{proof}
\begin{thm}
Let $ T \in ( 0 , \infty ) $ be the blow-up time for a solution of one of (AP),
(LP),
or (JP).
Assume that
\[
	\sup_{ t \in [ 0 , T ) } \max_{ s \in \mathbb{R} / L(t) \mathbb{Z} } \kappa ( s , t )
	< \infty
	.
\]
\par
For the solution of (AP),
\[
	\min_{ s \in \mathbb{R} / L(t) \mathbb{Z} } \kappa ( s , t )
	\leqq
	- \frac 1 { \sqrt { 4 ( T - t ) } }
\]
holds.
\par
For the solution of (LP),
\[
	\min_{ s \in \mathbb{R} / L(t) \mathbb{Z} } \kappa ( s , t )
	\leqq
	- \left\{ \frac { 2 \pi n } { 9 L(0) ( T - t ) } \right\}^{ \frac 13 }
\]
holds.
\par
For the solution of (JP),
there exists a time $ T_\ast \in [ 0 , T ) $ such that
\[
	- \min_{ s \in \mathbb{R} / L(t) \mathbb{Z} } \kappa ( s,t )
	\geqq
	\max_{ s \in \mathbb{R} / L(t) \mathbb{Z} } \kappa ( s,t )
\]
holds for $ t \in [ T_\ast , T ) $.
Additionally,
it holds that
\[
	\min_{ s \in \mathbb{R} / L(t) \mathbb{Z} } \kappa ( s , t )
	\leqq
	- \frac 1 { \sqrt { 2 C_\ast ( T - t ) } }
	,
\]
where
\[
	C_\ast
	=
	1 + \frac { L( T_\ast )^2 } { 4 \pi n A ( T_\ast ) } .
\]
\end{thm}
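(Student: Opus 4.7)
The plan is to parallel the argument of the preceding theorem, now tracking $K^{-}(t):=-\min_{s\in\mathbb{R}/L(t)\mathbb{Z}}\kappa(s,t)=\max_{s}(-\kappa)(s,t)$. The hypothesis $\sup_{t}\max_{s}\kappa<\infty$ combined with Lemma~\ref{Lemma 1st} ($W\to\infty$) and the trivial bound $W\leqq L\cdot\max_{s}\kappa^{2}$ forces $K^{-}(t)\to\infty$ as $t\to T-0$; in particular it furnishes $T_{*}\in[0,T)$ with $K^{-}(t)\geqq\max_{s}\kappa(s,t)$ for all $t\in[T_{*},T)$. Applying \cite[Lemma~B.40]{CLN} to $-\kappa$ after reparametrization makes $K^{-}$ continuous and gives $\frac{d^{+}K^{-}}{dt}(t)=\max_{s\in S_{t}^{-}}\partial_{t}(-\kappa)(s,t)$, where $S_{t}^{-}:=\{s:\kappa(s,t)=-K^{-}(t)\}$. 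At any $s\in S_{t}^{-}$, $\partial_{s}^{2}\kappa\geqq 0$; setting $\alpha:=(R+g)/L$ and using $\partial_{t}\kappa=\partial_{s}^{2}\kappa+\kappa^{2}(\kappa-\alpha)$, one gets $\partial_{t}\kappa\geqq-(K^{-})^{3}-\alpha(K^{-})^{2}$, hence the master inequality
\[
\frac{d^{+}K^{-}}{dt}\leqq(K^{-})^{3}+\alpha(K^{-})^{2}.
\]

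The three cases then reduce to controlling $\alpha$. For (AP), $g\equiv0$ and $\alpha=2\pi n/L$ is bounded (Lemma~\ref{isoperimetric inequality} and Proposition~\ref{basic properties} give $L\geqq\sqrt{4\pi A(0)}$), so once $K^{-}\geqq\alpha$ one has $(K^{-})^{3}+\alpha(K^{-})^{2}\leqq 2(K^{-})^{3}$, yielding $\frac{d^{+}}{dt}(K^{-})^{-2}\geqq-4$. For (LP), $g=I_{0}/R$ gives $\alpha=W/(2\pi n)$, and once $K^{-}>\sup\max\kappa$ the bound $W\leqq L(0)(K^{-})^{2}$ yields $(K^{-})^{3}+\alpha(K^{-})^{2}\leqq 3L(0)(K^{-})^{4}/(2\pi n)$ for $K^{-}$ sufficiently large, whence $\frac{d^{+}}{dt}(K^{-})^{-3}\geqq-9L(0)/(2\pi n)$. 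For (JP), $g=L^{2}/(2A)-R$ gives $\alpha=L/(2A)$, and for $t\geqq T_{*}$ the chain $K^{-}\geqq\max\kappa\geqq R/L$ (the last step being the mean-versus-max inequality) yields $\alpha/K^{-}\leqq\alpha L/R=L^{2}/(4\pi n A)$; the non-increase of $L^{2}/A$, which follows from the monotonicity of $I_{-1}$ in Proposition~\ref{basic properties}, gives $L(t)^{2}/A(t)\leqq L(T_{*})^{2}/A(T_{*})$, whence $(K^{-})^{3}+\alpha(K^{-})^{2}\leqq C_{*}(K^{-})^{3}$ and $\frac{d^{+}}{dt}(K^{-})^{-2}\geqq-2C_{*}$.

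In each case, integrating the resulting Dini-derivative inequality from $t$ to $t_{k}\in(t,T)$ along a sequence with $K^{-}(t_{k})\to\infty$, invoking \cite[Theorem~3]{HT} exactly as in the preceding proof, and letting $k\to\infty$ produces the asserted blow-up rates. The main obstacles are: (i) securing the existence of $T_{*}$ in the (JP) case from the forced divergence of $K^{-}$ while $\max\kappa$ stays bounded; (ii) extracting precisely the constant $C_{*}$ by combining the monotonicity of $L^{2}/A$ with the elementary bound $\max\kappa\geqq R/L$; and (iii) in (LP), dealing with the fact that $\alpha$ itself grows quadratically in $K^{-}$ (because $W$ does), which is exactly what degrades the rate from $(T-t)^{-1/2}$ to $(T-t)^{-1/3}$.
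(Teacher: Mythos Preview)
Your proposal follows essentially the same route as the paper: derive the master inequality $\frac{d^{+}K^{-}}{dt}\leqq (K^{-})^{3}+\frac{R+g}{L}(K^{-})^{2}$ from the evolution of $-\kappa$ and the second-derivative test at minimum points, control $(R+g)/L$ case by case once $|\kappa|\leqq K^{-}$ holds, and close with the Dini-integration argument via \cite[Theorem~3]{HT}. The case splits---the bound $R/L\leqq K^{-}$ in (AP), the identity $\alpha=W/R$ together with $W\leqq L(0)(K^{-})^{2}$ in (LP), and the monotonicity of $L^{2}/A$ combined with $\max_{s}\kappa\geqq R/L$ in (JP)---all match the paper's treatment.

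One small but genuine gap: Lemma~\ref{Lemma 1st} as stated gives only $\limsup_{t\to T-0}W(t)=\infty$, not $W(t)\to\infty$. The $\limsup$ suffices for the final sequence argument, but not for your claim that $K^{-}(t)\to\infty$ and hence that $K^{-}\geqq\max_{s}\kappa$ on a \emph{full} interval $[T_{*},T)$; with only a $\limsup$, $K^{-}$ could a priori dip below $\max_{s}\kappa$ infinitely often, and your differential inequalities (each of which uses $|\kappa|\leqq K^{-}$) would not be available on any terminal interval. The fix is immediate: invoke Theorem~\ref{Theorem blow up W} instead, whose pointwise bound $W(t)\geqq M/\sqrt{T-t}$ yields the honest limit $W\to\infty$ and hence $K^{-}\to\infty$ (using that $L$ stays bounded). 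This is exactly what the paper does.
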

\begin{rem}
The time $ T_\ast $ above exists for all cases.
\end{rem}
\begin{proof}
Here,
we set
\begin{align*}
	K (t) = & \
	- \min_{ s \in \mathbb{R} / L(t) \mathbb{Z} } \kappa ( s , t )
	,
	\\
	\frac { d^+ K } { dt } (t)
	= & \
	\limsup_{ h \to + 0 } \frac { K( t + h ) - K(t) } h
	.
\end{align*}
Define the set $ S_t $ by $ S_t = \{ s \in \mathbb{R} / L(t) \mathbb{Z} \, | \, - \kappa ( s,t ) = K(t) \} $.
As shown before,
it holds that
\[
	\frac { d^+ K } { dt } ( t  )
	=
	\max_{ s \in S_t } \partial_t ( - \kappa ) .
\]
$ - \kappa $ satisfies
\[
	\partial_t ( - \kappa )
	=
	\partial_s^2 ( - \kappa )
	+
	( - \kappa )^2
	\left\{
	( - \kappa ) + \frac { R + g } L
	\right\}
	.
\]
Since $ \partial_s^2 ( - \kappa ) \leqq 0 $ and $ - \kappa = K $ for $ s \in S_t $,
\[
	\partial_t ( - \kappa )
	\leqq
	K^3 + \frac { ( R + g ) K^2 } L
	.
\]
If $ \kappa \leqq C < \infty $ holds on $ [ 0 , T ) $,
then,
\[
	L \max\{ C^2 + K^2 \} \geqq
	\int_0^L \kappa^2 ds = W \to \infty \mbox{ as } t \to T - 0
\]
by Theorem \ref{Theorem blow up W}.
Since $ L $ is bounded,
we conclude that $ K \to \infty $ as $ t \to T - 0 $.
Therefore,
$ | \kappa | \leqq \max\{ C , K \} \leqq K $ near $ T $.
Hence,
there exists $ T_\ast \in [ 0 , T ) $ as mentioned in the statement.
Considering $ t \geqq T_\ast $,
we may assume that $ | \kappa | \leqq K $.
\par
In the case of (AP),
since $ g = 0 $,
\[
	\frac { ( R + g ) K^2 } L
	=
	\frac { R K^2 } L .
\]
Using this and
\[
	R = \int_0^L \kappa \, ds
	\leqq \int_0^L | \kappa | \, ds \leqq LK ,
\]
we have $ \partial_t ( - \kappa ) \leqq 2 K^3 $ on $ S_t $,
{\it i.e.},
\[
	\frac { d^+ K } { dt } ( t  ) \leqq 2 K^3 .
\]
Consequently,
we obtain the assertion as before.
\par
In the case of (LP),
\[
	\frac { K^2 g } L
	=
	\frac { K^2 I_0 } { R L }
	=
	\frac { K^2 } R \int_0^L \tilde \kappa^2 ds 
	\leqq
	\frac { K^2 } R \int_0^L \kappa^2 ds
	\leqq
	\frac { L K^4 } R
	.
\]
The estimate $ \displaystyle{ \frac RL \leqq K } $ holds for all cases.
Hence,
\[
	K^3
	=
	\frac LR \cdot \frac RL \cdot K^3
	\leqq
	\frac { L K^4 } R
	,
	\quad
	\frac { K^2 R} L
	= \left( \frac RL \right)^2 \frac { L K^2 } R
	\leqq
	\frac { L K^4 } R .
\]
Consequently,
we have
\[
	\frac { d^+ K } { dt } ( t ) \leqq \frac { 3 L K^4 } R = \frac { 3 L(0) K^4 } { 2 \pi n } .
\]
Here,
we use $ L \equiv L(0) $.
The statement follows from the above,
as shown before.
\par
In the case of (JP),
using $ \displaystyle{ R + g = \frac { L^2 } { 2A } } $ and Lemma \ref{basic properties},
we have
\[
	\frac { K^2 ( R + g ) } L
	=
	\frac { K^2 L } { 2 A }
	=
	\frac { L^2 } { 2A } \cdot \frac RL \cdot \frac { K^2 } R
	\leqq
	\frac { L( T_\ast )^2 } { 2 A( T_\ast ) } \cdot \frac { K^3 } R
	=
	\frac { L( T_\ast )^2 K^3 } { 4 \pi n A( T_\ast ) }
	.
\]
Hence,
it holds that
\[
	\frac { d^+ K } { dt } ( t ) \leqq
	\left( 1 + \frac { L( T_\ast )^2 } { 4 \pi n A( T_\ast ) } \right) K^3 ,
\]
which leads to the required conclusion and ends the proof.
\qed
\end{proof}
\section{Convergence of global solutions}
\label{Global solutions}
\par
In this section,
we assume that $ \vecf $ is a classical global solution of one of
(AP),
(LP),
or (JP),
and that the initial curve satisfies $ A(0) > 0 $.
We prove that $ \mathrm{Im} \vecf $ converges to an $ n $-fold circle exponentially as $ t \to \infty $.
\begin{rem}
However,
this conclusion is meaningless if $ n $-fold circles are only global solutions.
At least,
in the case of (AP),
under suitable assumptions on the initial curve,
regarding symmetry and convexity,
solutions exist globally in time even if $ n > 1 $.
See \cite{WangKong}.
\end{rem}
\par
Firstly we prove the decay of $ I_{-1} $.
\begin{lem}
For the global solution above,
$ I_{-1} (t) $ fulfills
\[
	0 \leqq I_{-1} (t)
	\leqq
	\frac { L(0)^2 I_{-1} (0) }{ L(t)^2 } \exp \left( - \int_0^t \frac { 8 \pi^2 n } { L( \tau )^2 } \, d \tau \right)
	.
\]
In particular,
the estimate
\[
	0 \leqq I_{-1} (t)
	\leqq
	\frac { L(0)^2 I_{-1} (0) }{ 4 \pi n A(0) } \exp \left( - \frac { 8 \pi^2 n } { L(0)^2 } t \right)
\]
is satisfied with respect to the global solution for (AP);
the estimate
\[
	0 \leqq I_{-1} (t)
	\leqq
	I_{-1} (0) \exp \left( - \frac { 8 \pi^2 n } { L(0)^2 } t \right)
\]
for the global solution of (LP).
In the case of (JP),
setting $ \displaystyle{ \bar L = \sup_{ t \in [ 0 , \infty ) } L(t) } $,
we have $ \bar L < \infty $,
and
\[
	0 \leqq I_{-1} (t)
	\leqq
	\frac { L(0)^2 I_{-1} (0) }{ 4 \pi n A(0) } \exp \left( - \frac { 8 \pi^2 n } { \bar L^2 } t \right) .
\]
\label{decay_of_I_{-1}}
\end{lem}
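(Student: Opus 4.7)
The plan is to base everything on the differential identity \pref{ d L^2 I_{-1} / dt } combined with the Wirtinger-type bound of Lemma \ref{Wirtinger type}, and then specialize to each flow by exploiting the monotonicity facts from Proposition \ref{basic properties}.

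First I would establish the lower bound $I_{-1}(t)\geqq 0$. Since the solution is global, Theorem \ref{Theorem blow-up} (applied with any time $t_0$ as the new initial time) forbids $I_{-1}$ from ever becoming strictly negative; together with the monotonicity $\frac{dI_{-1}}{dt}\leqq 0$ from Proposition \ref{basic properties}, this pins $I_{-1}(t)\in[0,I_{-1}(0)]$. Next I would derive the master differential inequality: substituting $I_0\geqq 4\pi^2 n I_{-1}$ into \pref{ d L^2 I_{-1} / dt } yields
\[
\frac{d}{dt}\bigl(L^2 I_{-1}\bigr)\leqq -8\pi^2 n I_{-1}=-\frac{8\pi^2 n}{L^2}\bigl(L^2 I_{-1}\bigr).
\]
A single Gronwall step on $u(t)=L(t)^2 I_{-1}(t)$ then gives the general estimate
\[
L(t)^2 I_{-1}(t)\leqq L(0)^2 I_{-1}(0)\exp\!\left(-\int_0^t\frac{8\pi^2 n}{L(\tau)^2}\,d\tau\right),
\]
and dividing by $L(t)^2$ produces the first displayed inequality of the lemma.

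For (AP), I would use that $L$ is non-increasing, so $L(\tau)^2\leqq L(0)^2$ in the integrand (giving $\int_0^t 8\pi^2 n/L^2\,d\tau\geqq 8\pi^2 n t/L(0)^2$), and that Lemma \ref{isoperimetric inequality} together with $A\equiv A(0)$ yields $L(t)^2\geqq 4\pi n A(0)$; substituting both produces the stated (AP) estimate. For (LP), the length is conserved, so $L(\tau)\equiv L(0)$ makes the general estimate collapse directly into the stated bound.

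The genuinely delicate case is (JP), where neither $L$ nor $A$ is monotone in an immediately useful way and $\bar L<\infty$ must be proved, not assumed. The plan is a short chain of \emph{a priori} bounds. From \pref{ d L^2 I_{-1} / dt } and Lemma \ref{Wirtinger type},
\[
\int_0^\infty I_{-1}(\tau)\,d\tau\leqq\frac{1}{8\pi^2 n}\bigl(L(0)^2 I_{-1}(0)-L(t)^2 I_{-1}(t)\bigr)\leqq\frac{L(0)^2 I_{-1}(0)}{8\pi^2 n}.
\]
Since Proposition \ref{basic properties} gives $L^2/A\leqq L(0)^2/A(0)$, the relation $\frac{dA}{dt}=L^2 I_{-1}/(2A)$ yields $\frac{dA}{dt}\leqq\frac{L(0)^2}{2A(0)}I_{-1}$, which is integrable in $t$; hence $A$ is uniformly bounded, and then $L^2\leqq\frac{L(0)^2}{A(0)}A$ gives a finite $\bar L$. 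Using $L(\tau)\leqq\bar L$ in the integrand of the general estimate and $L(t)^2\geqq 4\pi n A\geqq 4\pi n A(0)$ (from $I_{-1}\geqq 0$ and monotonicity of $A$) delivers the advertised (JP) inequality. The main obstacle, as expected, is this circular-looking step of bounding $\bar L$ for (JP); the resolution is to notice that the integrability of $I_{-1}$ on $[0,\infty)$ is already available from the master differential inequality \emph{before} any length bound is used, breaking the apparent circularity.
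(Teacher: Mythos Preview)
Your argument is correct and, for the master estimate as well as the (AP) and (LP) specializations, coincides with the paper's own proof. One small slip: in the (AP) case you invoke Lemma~\ref{isoperimetric inequality} to obtain $L(t)^2\geqq 4\pi n A(0)$, but that lemma only gives $L^2\geqq 4\pi A$. The bound with the factor $n$ is precisely the already-established inequality $I_{-1}(t)\geqq 0$, so simply cite that instead.

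For (JP) your route to $\bar L<\infty$ differs from the paper's. The paper first integrates \pref{ d L^2 I_{-1} / dt } to obtain $\int_0^t I_0\,d\tau\leqq\tfrac12 L(0)^2 I_{-1}(0)$, then inserts the Wirtinger-type bound into the $L^2$-evolution equation \pref{ d L^2 / dt } (with $g=L^2 I_{-1}/(2A)$) to get $\frac{dL^2}{dt}\leqq C I_0$, and integrates. You instead extract the integrability of $I_{-1}$ on $[0,\infty)$ directly from the master inequality, feed it into $\frac{dA}{dt}=\tfrac12\,\frac{L^2}{A}\,I_{-1}\leqq\frac{L(0)^2}{2A(0)}\,I_{-1}$ to bound $A$, and then recover the bound on $L^2$ via the ratio estimate $L^2/A\leqq L(0)^2/A(0)$ from Proposition~\ref{basic properties}. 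Both arguments rest on the same three ingredients (identity \pref{ d L^2 I_{-1} / dt }, Lemma~\ref{Wirtinger type}, and the uniform bound on $L^2/A$) and yield comparable explicit constants; your version is slightly more direct about ``breaking the circularity'', while the paper's version has the side benefit of already producing the bound on $\int_0^\infty I_0\,d\tau$ that is reused in Corollary~\ref{decay_of_int_t^infty I_0}.
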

\begin{proof}
For global solutions,
we know,
from Theorem \ref{Theorem blow-up},
that $ I_{-1} (t) \geqq 0 $ .
Hence,
we have
\be
	4 \pi^2 n I_{-1} (t) \leqq I_0 (t)
	\label{Wirtinger type +}
\ee
by Lemma \ref{Wirtinger type}.
Consequently,
\pref{ d L^2 I_{-1} / dt } becomes
\[
	\frac d { dt } \left( L^2 I_{-1} \right)
	+
	\frac { 8 \pi^2 n } { L^2 } \left( L^2 I_{-1} \right)
	\leqq
	0
	.
\]
Solving this differential inequality,
we obtain the first assertion.
\par
We use $ \sqrt{ 4 n \pi A(0) } \leqq L(t) \leqq L(0) $ for (AP),
and $ L(t) \equiv L(0) $ for (LP).
Then,
the second assertion follows for these two cases.
\par
Now,
we consider the case of (JP).
Integrating \pref{ d L^2 I_{-1} / dt },
we have
\[
	L^2 I_{-1} + 2 \int_0^t I_0 d \tau = L_0^2 I_{-1} (0) .
\]
$ \displaystyle{ \frac { L^2 } A } $ is uniformly positive and bounded by Proposition \ref{basic properties}. 
From this,
\pref{ d L^2 / dt } with $ \displaystyle{ g = \frac { L^2 I_{-1} } { 2A } } $ and \pref{Wirtinger type +},
we have
\[
	\frac { d L^2 } { dt } + 2 I_0
	=
	\frac { 2 \pi n L^2 } A I_{-1}
	\leqq
	\frac { L^2 } { 2 \pi A } I_0
	\leqq
	C I_0 .
\]
Integrating this,
we have
\[
	L^2 + 2 \int_0^t I_0 ( \tau ) \, d \tau
	\leqq
	L_0^2 + C \int_0^t I_0 ( \tau ) \, d \tau
	\leqq
	L_0^2 \left( 1 + C I_{-1} (0) \right) .
\]
Hence,
$ \bar L < \infty $.
It follows from \pref{ dA / dt } and $ \displaystyle{ g = \frac { L^2 I_{-1} } { 2A } \geqq 0 } $ that
\[
	\frac { d A^2 } { dt } = L^2 I_{-1} \geqq 0 .
\]
Therefore,
the lower bound $ L $ follows from $ L(t)^4 \geqq ( 4 \pi n A(t) )^2 \geqq ( 4 \pi n A(0) )^2 $.
Consequently,
we obtain the second assertion for (JP).
\qed
\end{proof}
\par
We denote the statement of Lemma \ref{decay_of_I_{-1}} as
\[
	I_{-1} (t) \leqq C e^{ - \lambda_{-1} t } .
\]
\begin{cor}
For the global solution above,
there exists $ L_\infty > 0 $ and $ A_\infty > 0 $ such that
\[
	| L - L_\infty |
	+
	| A - A_\infty |
	\leqq
	C e^{ - \lambda_{-1} t } .
\]
\end{cor}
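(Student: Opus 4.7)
The plan is to exploit the algebraic identity $L^2 I_{-1} = L^2 - 4 \pi n A$, which is merely a rewriting of the definition $I_{-1} = 1 - 4 \pi n A / L^2$, together with the exponential decay of $I_{-1}$ provided by Lemma \ref{decay_of_I_{-1}} and the uniform boundedness and positivity of $L^2 / A$ from Proposition \ref{basic properties}. I would treat the three flows separately, since which of $L$, $A$ is conserved differs in each case; in every case the convergence of one of them is automatic, and the convergence of the other follows algebraically from the identity once the decay of $I_{-1}$ is in hand.

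For (AP), $A \equiv A(0)$, so I set $A_\infty := A(0)$ and read the identity as $L^2 - 4 \pi n A(0) = L^2 I_{-1} \leqq L(0)^2 \cdot C e^{- \lambda_{-1} t}$, using $L \leqq L(0)$. With $L_\infty := \sqrt{ 4 \pi n A(0) }$, the bound $L \geqq L_\infty$ (valid because $L^2$ is monotone decreasing with limit $L_\infty^2$) converts this quadratic estimate to a linear one via $| L - L_\infty | = | L^2 - L_\infty^2 | / ( L + L_\infty )$. For (LP) the roles reverse: $L \equiv L(0) =: L_\infty$, and the identity yields $A - L(0)^2 / (4 \pi n) = - L(0)^2 I_{-1} / (4 \pi n)$, giving exponential convergence directly to $A_\infty := L(0)^2 / (4 \pi n) > 0$.

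Case (JP) is the most involved but follows the same template. From $dA^2 / dt = L^2 I_{-1} \geqq 0$ derived in the proof of Proposition \ref{basic properties}, $A^2$ is monotone increasing, and it is bounded above by $\bar L^4 / (4 \pi n)^2$ thanks to $\bar L = \sup_t L(t) < \infty$ established in Lemma \ref{decay_of_I_{-1}}. Hence $A^2$ has a finite limit $A_\infty^2$, and integrating the estimate $dA^2 / dt \leqq \bar L^2 \cdot C e^{- \lambda_{-1} t}$ from $t$ to $\infty$ gives $| A^2(t) - A_\infty^2 | \leqq C' e^{- \lambda_{-1} t}$. Dividing by $A + A_\infty \geqq 2 A(0)$ produces the desired bound for $A$. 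Finally the identity $L^2 = 4 \pi n A + L^2 I_{-1}$, combined with boundedness of $L$, gives convergence of $L^2$ to $L_\infty^2 := 4 \pi n A_\infty$ at rate $\lambda_{-1}$, and a second quadratic-to-linear passage through $L + L_\infty \geqq 2 \sqrt{ 4 \pi n A(0) }$ produces the bound for $L$.

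No serious obstacle is expected: the whole corollary is an algebraic unfolding of the exponential decay of $I_{-1}$. The only point deserving care is strict positivity of the limits $L_\infty, A_\infty$, which ensures that the passage from $L^2 \to L_\infty^2$ to $L \to L_\infty$ (and similarly for $A$) does not degenerate; this is guaranteed throughout by the a priori lower bounds $L^2 \geqq 4 \pi n A \geqq 4 \pi n A(0) > 0$ available from Proposition \ref{basic properties} and the hypothesis $A(0) > 0$.
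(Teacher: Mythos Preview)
Your proposal is correct and follows essentially the same approach as the paper: in each case one of $L,A$ is controlled a priori, and the other is recovered from the identity $L^2 I_{-1} = L^2 - 4\pi n A$ together with the exponential decay of $I_{-1}$, with the quadratic-to-linear passage handled by the uniform positive lower bound on $L$ (and on $A$). The only cosmetic differences are that for (LP) you read $A_\infty - A = \tfrac{L(0)^2}{4\pi n} I_{-1}$ directly from the identity, whereas the paper obtains the same formula by integrating $dA/dt = I_0/(2\pi n)$ and invoking \pref{ d L^2 I_{-1} / dt }; and for (JP) you integrate $dA^2/dt = L^2 I_{-1}$ rather than $dA/dt = L^2 I_{-1}/(2A)$, which amounts to the same thing after dividing by $A + A_\infty$.
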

\begin{proof}
In the case of (AP),
by Proposition \ref{basic properties},
we have $ \displaystyle{ \frac { dL } { dt } \leqq 0 } $.
Hence,
we conclude the convergence of $ \displaystyle{ \lim_{ t \to \infty } L(t) } $.
Set the limit value as $ L_\infty $.
Since $ A(t) \equiv A (0) $,
and since $ \displaystyle{ \lim_{ t \to \infty } I_{-1} (t) = 0 }$,
it holds that
\[
	L_\infty^2 = \lim_{ t \to \infty } 4 \pi n A(t)
	= 4 \pi n A(t) = 4 \pi n A(0) > 0
\]
and $ L_\infty \leqq L \leqq L(0) $.
Therefore,
\begin{align*}
	0
	\leqq & \
	L - L_\infty
	=
	\frac { L^2 - L_\infty^2 } { L + L_\infty }
	=
	\frac { L^2 - 4 \pi n A } { L + L_\infty }
	=
	\frac { L^2 I_{-1} } { L + L_\infty }
	\\
	\leqq & \
	\frac { L(0)^2 I_{-1} } { 2 L_\infty }
	=
	\frac { L(0)^2 I_{-1} } { 4 \sqrt{ \pi n A(0) } }
	\leqq
	C e^{ - \lambda_{-1} t }
	.
\end{align*}
\par
In the case of (LP),
since $ \displaystyle{ \frac { dA } { dt } \geqq 0 } $ and since $ 4 \pi A \leqq L^2 = L(0)^2 $,
we conclude the convergence of $ \displaystyle{ \lim_{ t \to \infty } A(t) } $.
Set the limit value as $ A_\infty $.
Since $ L(t) \equiv L(0) $,
and $ \displaystyle{ \lim_{ t \to \infty } I_{-1} (t) = 0 } $,
it holds that $ 4 \pi n A_\infty = L(0)^2 $.
Consequently,
\pref{ dA / dt } with $ \displaystyle{ g = \frac { I_0 } { 2 \pi n } } $ yields
\[
	0
	\leqq
	A_\infty - A
	=
	\int_t^\infty \frac { I_0 } { 2 \pi n } \, dt
	=
	\frac { L_0^2 } { 4 \pi n } I_{-1} (t)
	\\
	\leqq
	C e^{ - \lambda_{-1} t } .
\]
Here,
we use \pref{ d L^2 I_{-1} / dt } and Lemma \ref{decay_of_I_{-1}}.
\par
In the case of (JP),
$ \displaystyle{ \frac { dA } { dt } = \frac { L^2 I_{-1} } { 2A } \geqq 0 } $.
By Proposition \ref{basic properties},
$ \displaystyle{ \frac A { L^2 } } $ is uniformly positive and bounded.
Combining the above two statements with Lemma \ref{decay_of_I_{-1}},
we conclude
\[
	0
	\leqq
	A_\infty - A
	=
	\int_t^\infty \frac { L^2 I_{-1} } { 2A } \, dt
	\leqq
	C \int_t^\infty I_{-1} dt
	\leqq
	C e^{ - \lambda_{-1} t }
	.
\]
Furthermore,
we estimate that
\begin{align*}
	| L - L_\infty |
	= & \
	\frac { | L^2 - L_\infty^2 | } { L + L_\infty }
	=
	\frac { | L^2 I_{-1} + 4 \pi n A -  4 \pi n A_\infty | } { L + L_\infty }
	\\
	\leqq & \
	\frac { L^2 I_{-1} + 4 \pi n | A - A_\infty | } { L_\infty }
	\leqq
	C e^{ - \lambda_{-1} t }
	.
\end{align*}
\qed
\end{proof}
\begin{cor}
For the global solution above,
it holds that
\[
	\int_t^\infty I_0 dt 
	\leqq C e^{ - \lambda_{-1} t } .
\]
\label{decay_of_int_t^infty I_0}
\end{cor}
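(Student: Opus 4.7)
The plan is to read this corollary directly off the identity \pref{ d L^2 I_{-1} / dt }, which says
\[
	\frac{d}{dt}\bigl(L^2 I_{-1}\bigr) + 2 I_0 = 0 .
\]
Since $I_0 \geqq 0$, the quantity $L^2 I_{-1}$ is non-increasing, and in particular the improper integral $\int_t^\infty I_0 \, d\tau$ makes sense in $[0,\infty]$. Integrating the identity from $t$ to an arbitrary $T > t$ yields
\[
	2 \int_t^T I_0 \, d\tau = L(t)^2 I_{-1}(t) - L(T)^2 I_{-1}(T) .
\]

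The key step is then to pass $T \to \infty$, for which I need $L(T)^2 I_{-1}(T) \to 0$. The decay of $I_{-1}$ is already in hand from Lemma \ref{decay_of_I_{-1}}, namely $I_{-1}(T) \leqq C e^{-\lambda_{-1} T}$, so it suffices to know that $L(T)$ stays bounded along the flow. This is where I split into cases: for (AP), Proposition \ref{basic properties} gives $L(T) \leqq L(0)$; for (LP), $L(T) \equiv L(0)$; for (JP), the proof of Lemma \ref{decay_of_I_{-1}} already produced the uniform bound $L(T) \leqq \bar L < \infty$. In all three cases $L(T)^2 I_{-1}(T) \to 0$ exponentially.

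Letting $T \to \infty$ in the displayed equation therefore gives
\[
	2 \int_t^\infty I_0 \, d\tau = L(t)^2 I_{-1}(t) .
\]
Applying the uniform upper bound on $L(t)$ together with the exponential decay $I_{-1}(t) \leqq C e^{-\lambda_{-1} t}$ from Lemma \ref{decay_of_I_{-1}} yields the desired estimate $\int_t^\infty I_0 \, d\tau \leqq C e^{-\lambda_{-1} t}$, with the constant adjusted in each case to absorb the factor $L(t)^2$. There is no real obstacle here; the whole argument is an integration of a known identity combined with bounds already established in this section.
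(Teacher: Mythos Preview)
Your proof is correct and follows essentially the same approach as the paper: integrate the identity \pref{ d L^2 I_{-1} / dt } and use the uniform bound on $L$ together with Lemma~\ref{decay_of_I_{-1}}. The paper's version is simply terser, writing the conclusion in one line without spelling out the limit $T \to \infty$ or the case split for the boundedness of $L$.
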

\begin{proof}
We know that $ L $ is uniformly bounded dor all cases.
Therefore,
\pref{ d L^2 I_{-1} / dt } implies that
\[
	\int_t^\infty I_0 \, dt
	=
	\frac { L^2 I_{-1} } 2
	\leqq C e^{ - \lambda_{-1} t } .
\]
\qed
\end{proof}
\begin{lem}
For the global solution above,
there exists $ \lambda_0 > 0 $ such that
\[
	I_0 
	\leqq C e^{ - \lambda_0 t } .
\]
\label{decay_of_I_0}
\end{lem}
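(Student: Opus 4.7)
The plan is to derive an energy-type differential inequality for $ I_0 $ and combine it with Wirtinger's inequality for the zero-mean function $ \tilde \kappa $ and the integrability $ \int_t^\infty I_0 \, d \tau \leqq C e^{ - \lambda_{-1} t } $ from Corollary \ref{decay_of_int_t^infty I_0} to bootstrap exponential decay.

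First, I would compute $ \displaystyle{ \frac{dI_0}{dt} } $. Starting from Lemma \ref{Lemma 2nd}, the identity $ I_0 = L W - R^2 $ with $ R = 2 \pi n $ constant, and $ \displaystyle{ \dot L = \frac{ R g - I_0 }{ L } } $ obtained from \pref{ d L^2 / dt }, a short calculation yields
\[
	\frac{ d I_0 }{ dt } + \frac{ 2 I_1 }{ L^2 }
	=
	\frac{ 1 }{ L^2 } \bigl\{ J_4 + ( 3 R - g ) J_3 + 2 R ( R - g ) I_0 - I_0^2 \bigr\} .
\]
I would then bound the $ J_p $ by Gagliardo--Nirenberg in the form $ | J_p | \leqq C I_0^{ p/4 + 1/2 } I_1^{ p/4 - 1/2 } $ and control $ g $ uniformly: $ g \equiv 0 $ for (AP), $ g = I_0 / R $ for (LP), and $ g = L^2 / ( 2 A ) - R $ for (JP), where $ L $ and $ A $ are uniformly bounded above and below away from zero by Proposition \ref{basic properties} and the preceding corollary on $ | L - L_\infty | + | A - A_\infty | $. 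Young's inequality then absorbs a small multiple of $ I_1 / L^2 $ into the good term on the left, leaving
\[
	\frac{ d I_0 }{ dt } + \frac{ c I_1 }{ L^2 }
	\leqq
	C \bigl( I_0^2 + I_0^{ 5/3 } \bigr) .
\]

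Next, I would show $ I_0 ( t ) \to 0 $ as $ t \to \infty $. By Corollary \ref{decay_of_int_t^infty I_0}, $ \int_0^\infty I_0 \, dt < \infty $, so there is a sequence $ t_k \to \infty $ with $ I_0 ( t_k ) \to 0 $. Combined with the one-sided upper bound $ \frac{ d I_0 }{ dt } \leqq C ( 1 + I_0^2 ) $ obtained by discarding the good term on the left, a standard continuation argument (if $ I_0 \geqq \eta > 0 $ on intervals of length bounded away from zero, then $ \int I_0 \, dt = \infty $) upgrades this to $ I_0 ( t ) \to 0 $ along the full time axis.

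Finally, once $ I_0 $ is sufficiently small, $ C ( I_0^2 + I_0^{ 5/3 } ) \leqq \varepsilon I_0 $ with $ \varepsilon $ arbitrarily small. Wirtinger's inequality applied to $ \tilde \kappa $ gives $ 4 \pi^2 I_0 \leqq I_1 $, so the inequality reduces to $ \frac{ d I_0 }{ dt } + \lambda_0 I_0 \leqq 0 $ for $ t $ sufficiently large, and Gronwall yields the exponential decay. The main technical obstacle I anticipate is the (JP) case, where both the sign and the size of $ g $ require the uniform positive lower bound on $ A $; this is available from the preceding corollary, but the estimates for absorbing $ ( 3 R - g ) J_3 $ and $ 2 R ( R - g ) I_0 $ must be handled carefully so that the remaining nonlinearity is indeed $ o ( I_0 ) $ as $ I_0 \to 0 $.
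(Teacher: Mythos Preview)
Your differential identity for $\dfrac{dI_0}{dt}$ is correct, but the subsequent inequality
\[
	\frac{dI_0}{dt} + \frac{cI_1}{L^2} \leqq C\bigl(I_0^2 + I_0^{5/3}\bigr)
\]
is not: the term $\dfrac{2R(R-g)}{L^2}\,I_0$ on the right-hand side of your identity is genuinely \emph{linear} in $I_0$ and cannot be absorbed into a superlinear remainder. For (AP) one has $g\equiv 0$, so this term equals $\dfrac{2R^2}{L^2}I_0=\dfrac{8\pi^2 n^2}{L^2}I_0$; for (LP) and (JP) the coefficient $2R(R-g)$ likewise tends to $2R^2$ as $t\to\infty$. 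This is exactly the difficulty you flag in your last sentence, and it cannot be made $o(I_0)$.

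The consequence is that your final step breaks. Wirtinger's inequality for the zero-mean function $\tilde\kappa$ gives only $I_1\geqq 4\pi^2 I_0$, so from $\dfrac{2I_1}{L^2}$ you recover at most $\dfrac{8\pi^2}{L^2}I_0$ on the left; for $n>1$ this is strictly smaller than the $\dfrac{8\pi^2 n^2}{L^2}I_0$ sitting on the right, and even for $n=1$ the two exactly cancel, leaving no positive $\lambda_0$. Hence the reduction to $\dfrac{dI_0}{dt}+\lambda_0 I_0\leqq 0$ fails.

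The paper does not try to beat the linear term pointwise. It keeps it: after establishing the uniform bound on $I_0$ (via the continuation argument you also sketch), one has
\[
	\frac{d}{dt}\bigl(L^2 I_0\bigr) + 2\lambda\,L^2 I_0 \leqq C I_0 ,
\]
multiplies by $e^{2\lambda t}$, and integrates from $t/2$ to $t$. The contribution of the right-hand side is then bounded by $Ce^{2\lambda t}\int_{t/2}^{\infty}I_0\,d\tau$, and the already-known exponential decay of $\int_{t/2}^{\infty}I_0\,d\tau$ from Corollary~\ref{decay_of_int_t^infty I_0} closes the estimate. You invoke that corollary only to obtain $I_0\to 0$; the missing idea is to use it again, quantitatively, in the final step instead of relying on Wirtinger alone.
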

\begin{proof}
As in Section \ref{Blow-up},
we set
\[
	W = \int_0^L \kappa^2 ds
	,
	\quad
	R = \int_0^L \kappa \, ds
	,
	\quad
	J_p = L^{p-1} \int_0^L \tilde \kappa^p ds .
\]
As we know that $ L \to L_\infty > 0 $ as $ t \to \infty $,
it is enough to show that
\[
	L^2 I_0 
	\leqq C e^{ - \lambda_0 t } .
\]
Since $ I_0 = J_2 = LW - R^2 $,
we have from \pref{ d L^2 / dt } and Lemma \ref{Lemma 2nd}
\begin{align*}
	\frac d { dt } \left( L^2 I_0 \right)
	= & \
	\frac d { dt } \left( L^3 W - R^2 L^2 \right)
	=
	L^3 \frac { dW } { dt }
	+
	\left( \frac 32 LW - R^2 \right) \frac { d L^2 } { dt }
	\\
	= & \
	- 2 I_1 + J_4 + ( 3R - g ) J_3 + 3R ( R - g ) J_2 - R^3 g
	\\
	& \quad
	+ \,
	\left( \frac 32 I_0 + \frac 12 R^2 \right) ( - 2 I_0 + 2 Rg )
	\\
	= & \
	- 2 I_1 - 3 I_0^2
	+
	J_4 + ( 3R - g ) J_3 + 2 R^2 J_2 .
\end{align*}
We obtain
\[
	\frac d { dt } \left( L^2 I_0 \right)
	+ I_1
	+ 3 I_0^2
	\leqq
	C \left(
	I_0^3 + I_0 + I_0^{ \frac 53 } + | g |^{ \frac 43 } I_0^{ \frac 53 }
	\right)
\]
in a manner similar to the proof of Lemma \ref{Lemma 3rd}.
\par
Since $ g = 0 $ in (AP),
and $ \displaystyle{ g = \frac { L^2 I_{-1} } { 2A } } $ in (JP),
$ | g | $ is uniformly bounded for these cases.	
In (LP),
$ g = R^{-1} I_0 $.
Hence,
it holds for every case that
\[
	\frac d { dt } \left( L^2 I_0 \right)
	+ I_1
	+ 3 I_0^2
	\leqq
	C \left( I_0 + I_0^3 \right)
	.
\]
This can be presented as
\[
	\frac d { dt } \left( L^2 I_0 \right)
	+ I_1
	+ I_0^2 \left( 3 - C I_0 \right) 
	\leqq
	C I_0
	.
\]
By Corollary \ref{decay_of_int_t^infty I_0},
there exists $ t_0 > 0 $ such that
\[
	I_0 (t_0) \leqq \frac 1C
	,
	\quad
	\int_{ t_0 }^\infty I_0 dt \leqq \frac { L^2 } C
	.
\]
Set
\[
	t_1 = \sup \left\{ t \in [ t_0 , \infty ) \, \left| \,
	I_0 (t) < \frac 3C \quad ( t \in [ t_0 , \infty ) ) \right. \right\}
	.
\]
If $ t_1 < \infty $,
then,
\[
	\limsup_{ t \to t_1 - 0 } I_0 ( t ) = \frac 3C < \infty .
\]
For $ t \in ( t_0 , t_1 ) $,
we have
\[
	\frac d { dt } \left( L^2 I_0 \right)
	\leqq
	C I_0
	,
\]
and therefore,
\[
	I_0 (t)
	\leqq
	I_0 ( t_0 ) + \frac 1 { L^2 } \int_{ t_0 }^t I_0 dt
	\leqq
	\frac 2C
	=
	\frac 23 \limsup_{ t \to t_1 } I_0 ( t )
	.
\]
Letting $ t \to t_1 - 0 $,
we obtain a contradiction.
Consequently,
$ t_1 = \infty $,
that is,
$ \displaystyle{ I_0 (t) < \frac 3C } $ for $ t \in [ t_0 , \infty ) $.
Since we know that $ I_0 $ is uniformly bounded,
we obtain
\[
	\frac d { dt } \left( L^2 I_0 \right)
	+
	I_1
	+
	3 I_0^2
	\leqq
	C I_0
	.
\]
It follows from Wirtinger's inequality and the uniform estimate of $ L^2 $ that
\[
	\frac d { dt } \left( L^2 I_0 \right)
	+
	2 \lambda L^2 I_0
	\leqq
	C I_0
\]
for some constant $ \lambda > 0 $.
Multiplying both sides by $ e^{ 2 \lambda t } $,
and integrating from $ \displaystyle{ \frac t2 } $ to $ t $,
we have
\begin{align*}
	e^{ 2 \lambda t } L(t)^2 I_0 (t)
	\leqq & \
	C e^{ \lambda t } L \left( \frac t 2 \right)^2 I_0 \left( \frac t2 \right)
	+
	C \int_{ \frac t2 }^t e^{ 2 \lambda \tau } I_0 ( \tau ) \, d \tau
	\\
	\leqq & \
	C e^{ \lambda t }
	+
	C e^{ 2 \lambda t }\int_{ \frac t2 }^\infty I_0 ( \tau ) \, d \tau
	.
\end{align*}
That is,
we have
\[
	L(t)^2 I_0 (t)
	\leqq
	C e^{ - \lambda t }
	+
	C \int_{ \frac t2 }^\infty I_0 ( \tau ) \, d \tau
	.
\]
Using the uniform estimate of $ L $ and the exponential decay of $ \displaystyle{ \int_{ \frac t2 }^\infty I_0 dt } $,
we finally obtain the exponential decay of $ I_0 $.
\qed
\end{proof}
\begin{cor}
We have $ \widetilde I_{-1} \leqq C e^{ - \lambda_0 t } $.
\end{cor}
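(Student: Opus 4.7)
The plan is to derive this corollary as an immediate consequence of two already-established ingredients, without any new analytical work. Specifically, Lemma \ref{Wirtinger type tilde} gives the pointwise-in-time comparison
\[
	4 \pi^2 \widetilde I_{-1} (t) \leqq I_0 (t)
\]
on $[0,\infty)$, which is purely an inequality between Fourier-side sums and holds for every smooth closed curve of rotation number $n$, so in particular along the flow. Lemma \ref{decay_of_I_0} then supplies the exponential decay $ I_0 (t) \leqq C e^{ - \lambda_0 t } $ for the global solution.

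Chaining these, I would simply write
\[
	\widetilde I_{-1} (t)
	\leqq
	\frac { 1 } { 4 \pi^2 } I_0 (t)
	\leqq
	\frac { C } { 4 \pi^2 } e^{ - \lambda_0 t },
\]
and absorb the constant factor into $C$ to obtain the stated bound. Note that the same exponent $\lambda_0$ from Lemma \ref{decay_of_I_0} can be reused, since Lemma \ref{Wirtinger type tilde} is an inequality with a time-independent constant.

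There is essentially no obstacle here; the work has been done in the preceding lemma (the Wirtinger-type estimate for $\widetilde I_{-1}$) and in the decay estimate for $I_0$. The only thing worth double-checking is that Lemma \ref{Wirtinger type tilde} is applicable throughout the flow, which it is, because that lemma is a geometric inequality for any smooth closed plane curve with rotation number $n$ and contains no assumption coming from the evolution equation.
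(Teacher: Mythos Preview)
Your proof is correct and matches the paper's own argument exactly: the paper's proof consists of the single sentence ``The assertion follows from Lemmas \ref{Wirtinger type tilde} and \ref{decay_of_I_0},'' which is precisely the chain of inequalities you wrote out.
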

\begin{proof}
The assertion follows from Lemmas \ref{Wirtinger type tilde} and \ref{decay_of_I_0}.
\qed
\end{proof}
\par
Once we obtain the exponential decay of $ \widetilde I_{-1} $ and $ I_0 $,
we can obtain the convergence of $ \mathrm{Im} \vecf $ to an $ n $-fold circle as $ t \to \infty $.
\begin{thm}
Let $ \vecf $ be a classical global solution of one of
(AP),
(LP),
or (JP),
with the smooth initial curve satisfying $ A(0) > 0 $.
Then,
$ \mathrm{Im} \vecf $ converges to an $ n $-fold circle with centre $ \vecc_\infty $,
and radius $ \displaystyle{ r_\infty = \frac { L_\infty } { 2 \pi n } } $ in the following sense.
Set
\begin{gather*}
	\vecf ( s, t )
	=
	\vecc (t) + r(t)
	\left(
	\cos \frac { 2 \pi n ( s + \sigma (t) ) } { L(t) } ,
	\sin \frac { 2 \pi n ( s + \sigma (t) ) } { L(t) }
	\right)
	+ \vecrho ( s , t ) ,
	\\
	\vecc (t)
	=
	\frac 1 { L(t) } \int_0^{ L(t) } \vecf ( s,t ) \, ds
	,
	\quad
	r(t) = \frac { L(t) } { 2 \pi n } ,
\end{gather*}
with the $ \mathbb{R} / L(t) \mathbb{Z} $-valued function $ \sigma $ defined by
\[
	\hat f(n) (t) = \sqrt{ L(t) } r(t) \exp \left( \frac { 2 \pi i n \sigma (t) } { L(t) } \right) .
\]
Then,
there exist $ \vecc_\infty \in \mathbb{R}^2 $,
$ \displaystyle{ r_\infty = \frac { L_\infty } { 2 \pi n } > 0 } $,
$ \sigma_\infty \in \mathbb{R} / L_\infty \mathbb{Z} $,
$ \lambda > 0 $,
and $ C > 0 $ such that
\[
	\| \vecc (t) - \vecc_\infty \|
	+
	| r(t) - r_\infty |
	+
	\left| \frac { \sigma (t) } { L(t) } - \frac { \sigma_\infty } { L_\infty } \right|
	\leqq
	C e^{ - \lambda t } .
\]
Furthermore,
for $ k \in \{ 0 \} \cup \mathbb{N} $,
there exist $ \gamma_k > 0 $ and $ C_k > 0 $ such that
\[
	\| \vecrho ( \cdot , t ) \|_{ C^k ( \mathbb{R} / L(t) \mathbb{Z} ) }
	\leqq
	C_k e^{ - \gamma_k t } .
\]
\label{Theorem convergence}
\end{thm}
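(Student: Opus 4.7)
The plan is first to promote the exponential decays of $ I_0 $ and $ \widetilde I_{-1} $ established in Lemma \ref{decay_of_I_0} and its corollary to exponential decay of every higher energy $ I_\ell $, then to translate this into the required $ C^k $-decay of $ \vecrho $ via Parseval, and finally to track the evolution of the three scalar parameters $ \vecc $, $ r $, and $ \sigma $.

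For the first step I would imitate the energy method of \cite{Nagasawa-Nakamura1}. Differentiating $ I_\ell $ along the flow produces the principal dissipative term $ - 2 I_{ \ell + 1 } $ together with a finite sum of scale-invariant multilinear expressions in $ \tilde \kappa $ and its $ s $-derivatives, some of which carry the non-local factor $ g / L $. Theorem \ref{Theorem interpolation} with $ \widetilde I_{-1} $ as small parameter, together with the uniform bounds on $ L $ and on $ g $ (which is zero in (AP), equals $ I_0 / R $ in (LP), and equals $ L^2 I_{-1} / ( 2 A ) $ in (JP), the latter two controlled via Proposition \ref{basic properties} and the decays already proven), enables each such nonlinear term to be absorbed either into a small fraction of $ I_{ \ell + 1 } $ or into a factor premultiplied by a positive power of $ \widetilde I_{-1} $. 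Induction on $ \ell $ then produces a differential inequality of the form $ \frac { d I_\ell } { dt } + \lambda_\ell I_\ell \leqq C_\ell e^{ - \beta_\ell t } $, from which $ I_\ell (t) \leqq C_\ell' e^{ - \gamma_\ell t } $ follows for every $ \ell \in \mathbb{N} $.

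Next I would translate this into Fourier decay. Because $ \widetilde I_{-1} $ controls $ \sum_{ k \neq 0 } ( k - n )^2 | \hat f (k) |^2 $ and $ ( k - n )^2 k^{ 2 \ell } \geqq c_\ell > 0 $ on $ \mathbb{Z} \setminus \{ 0 , n \} $, the identities of Section \ref{Preliminaries} (extended to higher order) transfer exponential decay of the $ I_\ell $ into exponential decay of $ \sum_{ k \notin \{ 0 , n \} } k^{ 2 \ell } | \hat f (k) |^2 $ for every $ \ell $. Since $ L(t) $ is uniformly bounded away from $ 0 $ and $ \infty $, Sobolev embedding on $ \mathbb{R} / L(t) \mathbb{Z} $ converts these Fourier estimates into the asserted $ C^k $-decay of $ \vecrho $.

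It remains to handle the three scalar parameters. The radius $ r(t) = L(t) / ( 2 \pi n ) $ inherits exponential convergence to $ r_\infty = L_\infty / ( 2 \pi n ) $ from the corollary of Lemma \ref{decay_of_I_{-1}}. For $ \vecc $, after rescaling by $ u = s / L(t) $ to the $ t $-independent interval $ [ 0 , 1 ] $, one differentiates $ \vecc (t) = \int_0^1 \vecf ( L(t) u , t ) \, du $ in time and substitutes the flow equation $ \partial_t \vecf = ( \tilde \kappa - g / L ) \vecnu $; the resulting terms are dominated by quantities that decay exponentially by the previous step, so integrating from $ t $ to $ \infty $ gives $ \| \vecc (t) - \vecc_\infty \| \leqq C e^{ - \lambda t } $. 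Since $ | \hat f (n) (t) | = \sqrt{ L(t) } \, r(t) $ converges to a positive limit, the argument $ 2 \pi n \sigma (t) / L(t) $ is smoothly defined for large $ t $, and differentiating $ \hat f(n) (t) $ with the same estimates shows that $ \sigma (t) / L(t) $ converges exponentially to some $ \sigma_\infty / L_\infty $. The main obstacle is the combinatorial bookkeeping in the energy estimate for $ I_\ell $: one has to verify that every nonlinear term produced by the flow admits an application of Theorem \ref{Theorem interpolation} with a positive net power of $ \widetilde I_{-1} $ remaining after absorption into $ I_{ \ell + 1 } $. Beyond this point the argument runs in parallel with \cite{Nagasawa-Nakamura1, Nakamura2}, the essential new feature being the role of $ \widetilde I_{-1} $ in place of the (possibly negative) $ I_{-1} $.
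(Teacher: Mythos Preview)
Your proposal is correct and matches the paper's approach exactly: the paper itself merely states that the proof proceeds by the standard energy method combined with Theorem~\ref{Theorem interpolation}, the argument running parallel to \cite{Nagasawa-Nakamura1,Nakamura2} with $\widetilde I_{-1}$ playing the role formerly played by $I_{-1}$, and then omits all details. Your outline fills in precisely these steps---decay of each $I_\ell$ via interpolation and absorption, conversion to $C^k$-decay of $\vecrho$ through Fourier/Sobolev, and exponential convergence of $\vecc$, $r$, $\sigma$ by differentiating along the flow---so there is nothing to add.
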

\par
To prove this theorem we use the standard energy method with help of Theorem \ref{Theorem interpolation}.
Since the argument is almost parallel to those of \cite{Nagasawa-Nakamura1,Nakamura2},
we omit the details.

\end{document}